\newcommand{\e}{\epsilon}
\newcommand{\ue}{u_{\epsilon}}
\newcommand{\uh}{u_h}
\newcommand{\ub}{u_b}
\newcommand{\ur}{u_r}
\newcommand{\aex}{\alpha_{\epsilon}(x)}
\def\div{\operatorname{div}}
\def\O{\Omega}
\def\bxi{{\boldsymbol\xi}}
\def\btau{{\boldsymbol\tau}}
\def\L{\mathcal L}
\def\T{\mathcal T}
\def\N{\mathbb N}
\def\R{\mathbb R}
\def\grad{\operatorname{{\operatorname{\nabla}}}}
\def\curl{\operatorname{\bf{\operatorname{curl}}}}
\date{March 24, 2017}
\begin{document}

\title[A nonlinear RFB Method]
{A Residual-Free Bubble Formulation for nonlinear elliptic problems with oscillatory coefficients}

\author
{Manuel Barreda}
\address{Departamento de Matemática, Universidade Federal do Paran\'a, Curitiba - PR, Brazil} \email{barreda@mat.ufpr.br}

\author
{Alexandre L. Madureira}
\address{
Laborat\'orio Nacional de Computa\c c\~ao Cient\'\i fica, Petr\'opolis - RJ, Brazil} 
\address{
Funda\c c\~ao Get\'ulio Vargas, Rio de Janeiro - RJ, Brazil} 
\email{alm@lncc.br}
\email{alexandre.madureira@fgv.br}
\thanks{Research of the second author was supported by CNPq, Brazil}
\keywords{Multiscale PDE; Finite Element Method; Nonlinear elliptic  PDE; Numerical analysis; Residual Free Bubbles}
\begin{abstract}
We present an investigation of the Residual Free Bubble finite element method for a class of multiscale nonlinear elliptic partial differential equations. After proposing a nonlinear version for the method, we address fundamental questions as existence and uniqueness of solutions. We also obtain a best approximation result, and investigate possible linearizations that generate different versions for the method. As far as we are aware, this is the first time that an analysis for the nonlinear Residual Free Bubble method is considered.
\end{abstract}

\maketitle \numberwithin{equation}{section}

\newtheorem{theorem}{Theorem}[section]
\newtheorem{lemma}[theorem]{Lemma}
\newtheorem{corollary}[theorem]{Corollary}
\newtheorem{proposition}[theorem]{Proposition}
\newtheorem{remark}[theorem]{Remark}

\section{Introduction}\label{s:introdução}
Important physics and engineering problems are nonlinear and of multiscale nature. Examples include certain models for flow in porous media and mechanics of heterogeneous materials. We consider in this work nonlinear elliptic problems of the form
\begin{equation}\label{e:pnl}
-\div[a_\epsilon(x,\ue,\grad\ue)]=f\quad\text{in }\Omega,\qquad\ue=0\quad\text{on }\partial\Omega,
\end{equation}
where $\Omega\subset\mathbb{R}^2$ is a polygonal domain,
\begin{equation*}
a_\epsilon(x,\ue,\grad\ue)=\alpha_\epsilon(x)b(\ue)\grad\ue.
\end{equation*}
and $\alpha_\epsilon$ might have an oscillatory nature. We describe further restrictions on the coefficients latter on.

Problems like~\eqref{e:pnl} are often dealt with using homogenization techniques, even in the linear case. However, this is not always convenient due to restrictive hypothesis on the coefficients, like periodicity or certain probabilistic distributions. Thus, even for the linear situation, several authors developed methods that can compute approximations that do not rely on homogenization. 

It is well-known that standard Galerkin methods perform poorly for such equations, linear or nonlinear, under the presence of oscillatory coefficients~\cites{brezzi,MR2477579}, and there is a strong interest in developing numerical schemes that are efficient for problems with multiscale nature. Important methods include the \emph{Generalized Finite Element Method} (GFEM)~\cite{MR701094},   the \emph{Discontinuous Enrichment Method} (DEM)~\cite{MR1870426}, the \emph{Heterogeneous Multiscale Method} (HMM)~\cite{EW-BE}, and the~\emph{Multiscale Hybrid Mixed Method} (MHM)~\cites{MR3143841,MR3066201,madureira}. We concentrate our literature review on the the \emph{Residual-Free Bubble Method} (RFB)~\cites{MR1222297,B-R,brezzi,MR1159592,MR2203943,MR2142535} and the \emph{Multiscale Finite Element Method} (MsFEM)~\cites{TH-HW,MR2477579,E-H-G,MR1740386,MR1642758,efenpank,MR2448695} since they are closer to our own method. For all the above methodology, the goal is to derive numerical approximations for the multiscale solution using a mesh that is coarser than the characteristic length $\epsilon$ of the oscillations (in opposition to~\cites{SV1,SV2}).

The idea behind the MsFEM is to incorporate local information of the underlying problem into the basis functions of the finite element spaces, capturing microscale aspects. Its analysis was first considered for linear problems, and assuming that the coefficients of the equations are periodic~\cites{MR1642758,MR1740386}. Latter, the non periodic case was also considered~\cite{MR2982460}. An extension for nonlinear problems appears in~\cite{E-H-G}, for pseudo-monotone operators, and the authors show that, under periodicity hypothesis, the numerical solution converges towards the homogenized solution. They also determine the convergence rate if the flux depends only on the gradient of the solution. Further variations of the method were considered in~\cites{chen,CH-Y}. The MHM method shares some of the characteristics of the MsFEM, but so far it was considered only for linear problems.

The HMM approach for linear and nonlinear problems differs considerably, but, as in the MsFEM, the method is efficient in terms of capturing the macroscale behavior of multiscale problems. See~\cites{EW-BE,M-Y} for a description of the method, and~\cite{MR2114818} for a analysis of the method involving linear and nonlinear cases.

The Residual Free Bubble (RFB) formulation~\cites{MR1222297,MR1159592,B-R} was first considered with advection-reaction-diffusion problems in mind. The use of RFB for problems with oscillatory coefficients was already suggested in~\cite{brezzi}, and investigated in~\cite{SG} for the linear case. See~\cite{MR2901822} for a clear description of how the MsFEM and RFB relate. 

In the present work, we extend the RFB formulation for a class of nonlinear problems, with oscillatory coefficients, as in~\eqref{e:pnl}. Such model is a natural extension of the linear problem with oscillatory coefficients, and of the nonlinear problems as considered in~\cite{D-D}, without oscillatory coefficients. We remark that the RFB was considered only in the linear setting, with one exceptions~\cite{ramalho} which considers numerical experiments with RFB for shallow water problem in an ad hoc manner.

Assume that $\alpha_\epsilon(.): \Omega \rightarrow\mathbb{R}$ is measurable, and that there exist positive constants $\alpha_0$ and $\alpha_1$ such that
\begin{equation}\label{limitação.hipótesesH1}
0<\alpha_0\leq\alpha_{\epsilon}(x)\leq\alpha_1\quad\text{almost everywhere in }\Omega.
\end{equation}
Assume also that $b:\mathbb{R}\rightarrow\mathbb{R}$ is continuous and belongs to $W^{2,\infty}(\R)$, and that there exists a constant $b_0$ such that
\begin{equation}\label{limitação.hipótesesH2}
0<b_0\leq b(t) \quad\text{for all }t\in \mathbb{R}.
\end{equation}
Note that a uniform coercivity follows from the above hypothesis, i.e., for almost every $x\in\Omega$, and all $t\in\mathbb{R}$ and $\bxi\in\mathbb{R}^2$,
\begin{equation*}
\alpha_\epsilon (x)b(t)\bxi.\bxi\geq \alpha_0 b_0 \|\bxi\|^2.
\end{equation*}
Rewriting~\eqref{e:pnl} in its variational formulation, we have that $\ue\in H_0^1(\Omega)$ solves
\begin{equation}\label{s:varnonlin.ms}
a(\ue,v)=(f,v)\quad\text{for all }v\in H^1_0(\Omega),
\end{equation}
where
\begin{equation}\label{s:formanonlinear.ms}
a(\psi,\phi)=\int_\Omega\alpha_\epsilon(x)b(\psi)\grad\psi\cdot\grad\phi\,dx.
\end{equation}

Throughout this paper, we denote by $L^2(\Omega)$ the space of square integrable functions, by $W^{q,p}$, $H_0^1(\Omega)$, $H^1(\Omega)$ the usual Sobolev Spaces, and by $H^{-1}(\Omega)$ the dual space of $H_0^1(\Omega)$~\cites{brezis,evans}. By $C$ we denote a generic constant that might have different values at different locations, but that does not depend on $h$ or $\epsilon$.

The outline of the article is as follows. After the introductory Section~\ref{s:introdução}, we describe the RFB method in Section~\ref{s:rfb},
and discuss existence and uniqueness of solutions in Section~\ref{s:eus}. A best approximation result is obtained in Section~\ref{s:melaprox}, and possible linearizations are discussed in Section~\ref{s:lineariz}.

\section{The Residual Free Bubble Method}\label{s:rfb}
Let $\T_h=\{K\}$ be a partition of $\Omega$ into finite elements
$K$, and, associated to $\T_h$, the subspace $V_h\subset
H_0^1(\Omega)$ of piecewise polynomials. The classical finite
element Galerkin method seeks a solution
of~\eqref{s:varnonlin.ms} within $V_h$. The RFB method seeks a
solution within the enlarged, or enriched, space $V_r=V_h\oplus
V_b$, where the bubble space is given
\begin{equation*}
V_b=\{v \in H_0^1(\Omega):\, v|_K \in H_0^1(K) \text{ for all }K \in\T_h\}.
\end{equation*}
That means that we seek $u_r\in V_r$ such that
\begin{equation}\label{e:rfb}
\int_\Omega\alpha_\e(x)b(u_r)\grad(u_r)\cdot\grad v_r\,dx=\int_\Omega fv_r\,dx\quad\text{for all }v_r\in V_r.
\end{equation}
The second equation in the above system is obtained, for each fixed element $K$, by considering $v_r|_K\in H_0^1(K)$ arbitrary and vanishing outside $K$. An integration by parts yield the strong equation of~\eqref{e:rfb}.

This is equivalent to search for $u_r=u_h+u_b$, where $u_h\in V_h$ and $u_b\in V_b$ solve
\begin{equation}\label{e:rfb2}
\begin{gathered}
\int_\Omega\alpha_\e(x)b(u_h+u_b)\grad(u_h+u_b)\cdot\grad v_h\,dx
=\int_\Omega fv_h\,dx\quad\text{for all }v_h\in V_h,
\\
-\div[\alpha_\e(x)b(u_h+u_b)\grad(u_h+u_b)]=f
\quad\text{in }K,\text{ for all }K\in\T_h.
\end{gathered}
\end{equation}
The coupled system~\eqref{e:rfb2} defines the \emph{Residual Free Bubble Method}. The use of bubbles allows the \emph{localization} of the problems of the second equation of~\eqref{e:rfb2}, while the first equation has a global character. Such formulation induces a two-level discretization, where the global problem given by the first equation in~\eqref{e:rfb2} should be discretized by a coarse mesh, and the local problems given by the second equation of~\eqref{e:rfb2} should be solved in a fine mesh. Thus, in terms of computational cost, the first equation is global but posed in a coarse mesh, and the second equation requires refined meshes, but they are local and can be solved in parallel.

Note that for linear problems, it is possible to perform static condensation, ``eliminating'' the bubble part in the final formulation, which is then modified and posed only on the polynomial space~\cites{MR1222297,brezzi,MR1159592,B-R,F-R,SG}. See remark below.

\begin{remark}\label{obslinear}
If $\L$ denotes a linear differential operator, and $a(\cdot,\cdot)$ the associated bilinear form, then it results from the RFB that $u_b\in H_0^1(K)$ solves
\begin{equation*}
\L u_b=-\L u_h+f \quad \text{in }K.
\end{equation*}
Denoting by $\L_K^{-1}: H^{-1}(K)\rightarrow H_0^1(K)$ the local solution operator, we gather that $u_b|_K=\L_K^{-1}(f-\L u_h)$. Thus $u_h \in V_h$ solves that
\begin{equation*}
a(u_h,v_h)+a(\-\sum_{K\in\T_h}\L_K^{-1}\L u_h,v_h)
=(f,v_h)-a(\sum_{K\in\T_h}\L_K^{-1}f,v_h)\quad\text{for all } v_h\in V_h.
\end{equation*}
The formulation above is a perturbed Galerkin formulation. The perturbation aims to capture the microscale effects neglected by coarse meshes.
\end{remark}

\section{Existence and Uniqueness of Solutions}\label{s:eus}
In this section we prove existence and uniqueness results for the continuous problem and for the RFB formulation. We adapt here ideas present in~\cites{boccardo-murat,artola-duvaut}. We shall make use of the following version of the Schauder Fixed-Point Theorem~\cite{diaz-naulin}.
\begin{theorem}[Schauder Fixed-Point Theorem]\label{t:sfp}
Let $E$ be a normed space, $A\subset E$ a non-empty convex set, and $C\subset A$ compact. Then, every continuous mapping $T:A\rightarrow C$ has at least one fixed point.
\end{theorem}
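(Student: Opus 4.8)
The plan is to reduce this infinite-dimensional statement to the finite-dimensional Brouwer fixed-point theorem through a finite-dimensional approximation of the compact set $C$, exploiting the crucial fact that the image of $T$ is forced to lie inside $C$. I would take Brouwer's theorem as the starting ingredient: every continuous self-map of a nonempty compact convex subset of a finite-dimensional normed space has a fixed point.

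First I would, for each $\e>0$, use the compactness (hence total boundedness) of $C$ to pick a finite $\e$-net $\{y_1,\dots,y_n\}\subset C$. With the continuous weights $\lambda_i(y)=\max\{0,\e-\|y-y_i\|\}$, which are strictly positive somewhere on $C$ precisely because the $y_i$ form an $\e$-net, I normalize $\mu_i=\lambda_i/\sum_j\lambda_j$ and define the \emph{Schauder projection} $P_\e(y)=\sum_i\mu_i(y)\,y_i$. This is a convex combination of the $y_i$, continuous on $C$, and satisfies $\|P_\e(y)-y\|<\e$ for every $y\in C$, since only indices with $\|y-y_i\|<\e$ contribute to the sum. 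The structural point that makes the argument work is that the finite-dimensional set $S_\e=\operatorname{conv}\{y_1,\dots,y_n\}$ lies inside $A$: each $y_i\in C\subset A$ and $A$ is convex, so $S_\e\subset A$, and $T$ is defined on all of $S_\e$.

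Next I would study the composition $T_\e=P_\e\circ T$ on $S_\e$. Because $T(S_\e)\subset T(A)\subset C$ and $P_\e$ sends $C$ into $S_\e$, the map $T_\e\colon S_\e\to S_\e$ is a continuous self-map of a nonempty compact convex subset of a finite-dimensional space, so Brouwer produces a fixed point $x_\e\in S_\e$ with $x_\e=P_\e(T(x_\e))$. The projection estimate then yields $\|x_\e-T(x_\e)\|=\|P_\e(T(x_\e))-T(x_\e)\|<\e$, i.e.\ an approximate fixed point. Taking $\e=1/k$ gives points $x_k$ with $\|x_k-T(x_k)\|<1/k$; since $T(x_k)\in C$ and $C$ is compact, a subsequence $T(x_{k_j})\to z\in C$, the estimate forces $x_{k_j}\to z$ as well, and continuity of $T$ gives $T(x_{k_j})\to T(z)$, whence $z=T(z)$.

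The only genuinely deep ingredient is Brouwer's theorem itself, which carries the topological content; everything after it is a standard approximation scheme. The main technical care lies in the bookkeeping for $P_\e$ — verifying that the denominator never vanishes on $C$, that $P_\e$ is continuous, and that its range sits in $S_\e\subset A$ so that $T$ may legitimately be applied — together with the extraction of the convergent subsequence, where compactness of $C$ (rather than of $A$) is exactly what is available and exactly what is needed.
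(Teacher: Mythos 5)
Your proof is correct, and it is the classical argument: Schauder projection onto the convex hull of a finite $\epsilon$-net, Brouwer's theorem in finite dimension, then a limit of approximate fixed points. There is, however, nothing in the paper to compare it with: the authors do not prove Theorem~\ref{t:sfp} at all, they quote it as a known result and cite the article of D\'{\i}az and Naulin for a proof of the Schauder--Tychonoff theorem. Against that background your write-up stands on its own, and the points you single out are exactly the right ones: positivity of $\sum_j\lambda_j$ on $C$ (so $P_\epsilon$ is well defined, continuous, and satisfies $\|P_\epsilon(y)-y\|<\epsilon$ there), the inclusion $S_\epsilon=\operatorname{conv}\{y_1,\dots,y_n\}\subset A$ (this is precisely where convexity of $A$ enters, and it explains why $C$ itself need not be convex in this formulation), and the final extraction of a convergent subsequence of $T(x_k)$ inside the compact set $C$, with continuity of $T$ at the limit $z\in C\subset A$ closing the argument. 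Two features of your route are worth noting. First, it never uses completeness of $E$, which matters because the statement assumes only a normed space; proofs that pass through the closed convex hull of $C$ would need completeness (or local convexity plus completeness) to guarantee that hull is compact, so the $\epsilon$-net approximation is the right mechanism here. Second, the reference the paper cites treats the more general locally convex (Tychonoff) setting, whereas your argument is the elementary normed-space version --- which is all the paper actually uses when it applies the theorem in Section~\ref{s:eus}.
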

The following result guarantees existence and uniqueness of solutions for the variational problem~\eqref{s:varnonlin.ms}.
\begin{theorem}\label{t:eu}
Let $\alpha_\epsilon(.)$ and $b(.)$ such that~\eqref{limitação.hipótesesH1} and~\eqref{limitação.hipótesesH2} hold. Then, given $f\in L^2(\Omega)$, the variational problem~\eqref{s:varnonlin.ms} has one and only one solution in $H^1_0(\Omega)$.
\end{theorem}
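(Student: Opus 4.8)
The plan is to establish existence through a Schauder fixed-point argument and uniqueness through a Kirchhoff-type change of variables. For existence I would first \emph{linearize by freezing} the nonlinear coefficient: given $w\in L^2(\Omega)$, define $T(w)=u\in H_0^1(\Omega)$ as the unique solution, furnished by Lax--Milgram, of the linear problem
\[
\int_\Omega\alpha_\epsilon(x)\,b(w)\,\grad u\cdot\grad v\,dx=(f,v)\qquad\text{for all }v\in H_0^1(\Omega).
\]
This problem is well posed because $b\in W^{2,\infty}(\R)\subset L^\infty(\R)$ bounds the frozen coefficient $\alpha_\epsilon(x)b(w(x))$ from above, while~\eqref{limita��o.hip�tesesH1} and~\eqref{limita��o.hip�tesesH2} give coercivity with constant $\alpha_0 b_0$. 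Testing with $v=u$ and invoking Poincaré yields an a priori bound $\|u\|_{H_0^1(\Omega)}\le R$, where $R$ depends only on $\|f\|_{L^2(\Omega)}$, $\alpha_0$, $b_0$ and $\Omega$, and is crucially \emph{independent of} $w$.

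Next I would assemble the data for Theorem~\ref{t:sfp}. Take $E=L^2(\Omega)$, and let $A=C$ be the closure in $L^2(\Omega)$ of the ball $\{w\in H_0^1(\Omega):\|w\|_{H_0^1(\Omega)}\le R\}$; by the Rellich--Kondrachov compact embedding $H_0^1(\Omega)\hookrightarrow L^2(\Omega)$ this set is convex and compact in $L^2(\Omega)$, and the a priori bound gives $T(A)\subset C$. It remains to check that $T$ is continuous for the $L^2$ topology. If $w_n\to w$ in $L^2(\Omega)$, the bound makes $u_n=T(w_n)$ bounded in $H_0^1(\Omega)$, so along a subsequence $u_n\rightharpoonup u^*$ weakly in $H_0^1(\Omega)$ while $w_n\to w$ almost everywhere; continuity and boundedness of $b$ then give $\alpha_\epsilon b(w_n)\grad v\to\alpha_\epsilon b(w)\grad v$ strongly in $L^2(\Omega)^2$ by dominated convergence, and the weak--strong pairing passes to the limit in the defining identity, showing $u^*=T(w)$ by uniqueness of the linear solution. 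A routine subsequence argument upgrades this to convergence of the whole sequence, so $T$ is continuous. Schauder then produces a fixed point $u=T(u)$, which is exactly a solution of~\eqref{s:varnonlin.ms}.

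For uniqueness I would exploit the gradient structure via the primitive $B(t)=\int_0^t b(s)\,ds$. Since $b$ is bounded and positive, $B$ is Lipschitz, strictly increasing, and $B(0)=0$, so for any $u\in H_0^1(\Omega)$ one has $B(u)\in H_0^1(\Omega)$ with $\grad B(u)=b(u)\grad u$; the form therefore rewrites as $a(u,v)=\int_\Omega\alpha_\epsilon(x)\,\grad B(u)\cdot\grad v\,dx$. If $u_1,u_2$ both solve~\eqref{s:varnonlin.ms}, subtracting and testing with $v=B(u_1)-B(u_2)\in H_0^1(\Omega)$ gives
\[
\int_\Omega\alpha_\epsilon(x)\,\bigl|\grad\bigl(B(u_1)-B(u_2)\bigr)\bigr|^2\,dx=0,
\]
so $B(u_1)=B(u_2)$ almost everywhere by coercivity together with the vanishing boundary trace, and strict monotonicity of $B$ forces $u_1=u_2$.

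I expect the main obstacle to be the continuity of $T$ in the second step: one must select the mixed topology—weak convergence of $u_n$ in $H_0^1(\Omega)$ against strong convergence of the frozen coefficients in $L^2(\Omega)$—so that the nonlinear term converges as a weak--strong product, and then combine this with uniqueness of the linear problem to identify the limit without leaving room for competing subsequences. The remaining ingredients, namely Lax--Milgram, the a priori estimate, and the Kirchhoff-type substitution, are routine once this compactness-continuity mechanism is in place.
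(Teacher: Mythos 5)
Your proof is correct and follows the same two-pronged strategy as the paper: existence via the frozen-coefficient solution map plus Schauder's theorem, and uniqueness via the Kirchhoff primitive $B(t)=\int_0^t b(s)\,ds$ (the paper's $\tilde b$). The differences are in execution, and one of them genuinely matters. At the Schauder step the paper instantiates Theorem~\ref{t:sfp} with $A=E=L^2(\Omega)$ and $C=H_0^1(\Omega)$; but $H_0^1(\Omega)$ is not a compact subset of $L^2(\Omega)$ --- only its bounded subsets are precompact, by Rellich--Kondrachov --- and the paper never states the a priori bound that would confine the image of $T^\epsilon$, so the hypotheses of Theorem~\ref{t:sfp} are not literally met as written there. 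Your choice of $A=C$ equal to the $L^2$-closure of the ball $\{w\in H_0^1(\Omega):\|w\|_{H_0^1(\Omega)}\le R\}$, with $R$ coming from the $w$-independent energy estimate, supplies exactly the convexity, the compactness, and the invariance $T(A)\subset C$ that the quoted theorem requires; in this respect your write-up repairs a gap in the paper's own argument. The continuity proofs also differ in technique: the paper's Lemma~\ref{lem3.1} establishes strong $H^1$-continuity of $T^\epsilon$ via the energy estimate $\|\nabla(T(w_m)-T(w))\|_{L^2(\Omega)}\le C\,\|[b(w)-b(w_m)]\nabla T(w)\|_{L^2(\Omega)}$ followed by a convergence-in-measure and dominated-convergence argument, whereas you obtain only $L^2$-continuity from weak compactness, weak--strong pairing, and uniqueness of the linear limit problem --- a weaker conclusion, but entirely sufficient since Schauder is applied in $E=L^2(\Omega)$. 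Finally, for uniqueness the paper (Lemma~\ref{l:contpf}) observes that the Kirchhoff transform converts~\eqref{e:pnl} into the linear problem $-\operatorname{div}[\alpha_\epsilon(x)\nabla U_\epsilon]=f$ and quotes Lax--Milgram, while you test the difference of the two transformed solutions directly and invoke strict monotonicity of $B$; both versions rest on the chain rule of Lemma~\ref{lem3.2} and are the same argument in substance.
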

Our proof of Theorem~\ref{t:eu} is based on the lemmata that follow. We first observe
that~\eqref{s:formanonlinear.ms} suggests the
definition
\begin{equation}\label{operador_ponto_fixo_cont}
T^\epsilon:\,L^2(\Omega)\rightarrow H^1_0(\Omega),
\end{equation}
such that, for every $w\in L^2(\Omega)$, the operator $w^\e=T^\e(w)\in H^1_0(\Omega)$ solves
\begin{equation}
\int_\Omega\alpha_\epsilon(x)b(w)\grad w^\epsilon.\grad v\,dx
=\int_\Omega fv\,dx\quad\text{for all } v \in H^1_0(\Omega).
\end{equation}
The operator $T^\epsilon$ is clearly well-defined since, from the hypothesis imposed on $\alpha_{\epsilon}$ and $b$, the bilinear form above satisfies the hypothesis of Lax-Milgram Lemma.
\begin{lemma}\label{lem3.1}
Under the hypothesis of Theorem~\ref{t:eu}, the operator $T^\epsilon$ given by~\eqref{operador_ponto_fixo_cont} is continuous.
\end{lemma}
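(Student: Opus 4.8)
The plan is to prove sequential continuity: I would show that whenever $w_n\to w$ in $L^2(\Omega)$, the corresponding solutions $w_n^\epsilon:=T^\epsilon(w_n)$ converge to $w^\epsilon:=T^\epsilon(w)$ in $H^1_0(\Omega)$. Writing the defining identities for $w_n^\epsilon$ and $w^\epsilon$ and subtracting, the right-hand sides $\int_\Omega fv\,dx$ cancel. I would then split the integrand by adding and subtracting $\alpha_\epsilon(x)b(w_n)\grad w^\epsilon$, namely
\begin{equation*}
\alpha_\epsilon(x)\bigl[b(w_n)\grad w_n^\epsilon-b(w)\grad w^\epsilon\bigr]
=\alpha_\epsilon(x)b(w_n)\grad(w_n^\epsilon-w^\epsilon)
+\alpha_\epsilon(x)\bigl(b(w_n)-b(w)\bigr)\grad w^\epsilon,
\end{equation*}
and test the resulting identity against $v=w_n^\epsilon-w^\epsilon\in H^1_0(\Omega)$, obtaining
\begin{equation*}
\int_\Omega\alpha_\epsilon(x)b(w_n)\,|\grad(w_n^\epsilon-w^\epsilon)|^2\,dx
=-\int_\Omega\alpha_\epsilon(x)\bigl(b(w_n)-b(w)\bigr)\grad w^\epsilon\cdot\grad(w_n^\epsilon-w^\epsilon)\,dx.
\end{equation*}

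The next step is to estimate both sides. On the left I would invoke the uniform coercivity recorded in the introduction, bounding it below by $\alpha_0 b_0\,\|\grad(w_n^\epsilon-w^\epsilon)\|_{L^2(\Omega)}^2$. On the right I would use $\alpha_\epsilon(x)\le\alpha_1$ together with the Cauchy--Schwarz inequality, isolating the factor $\grad(w_n^\epsilon-w^\epsilon)$, to obtain
\begin{equation*}
\|\grad(w_n^\epsilon-w^\epsilon)\|_{L^2(\Omega)}
\le\frac{\alpha_1}{\alpha_0 b_0}
\left(\int_\Omega|b(w_n)-b(w)|^2\,|\grad w^\epsilon|^2\,dx\right)^{1/2}.
\end{equation*}
By Poincar\'e's inequality the left-hand side controls $\|w_n^\epsilon-w^\epsilon\|_{H^1_0(\Omega)}$, so continuity reduces entirely to showing that the integral on the right tends to zero.

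The main obstacle is precisely this last convergence, because $\grad w^\epsilon$ is merely in $L^2(\Omega)$ while the nonlinear factor $b(w_n)-b(w)$ is controlled only through the $L^2$ convergence of $w_n$; a direct H\"older estimate on the triple product fails. I would resolve this with dominated convergence combined with the subsequence principle. Since $b\in W^{2,\infty}(\R)$, the function $b$ is bounded, say $|b|\le M$, so the integrand is dominated by $4M^2|\grad w^\epsilon|^2\in L^1(\Omega)$ uniformly in $n$. Because $w_n\to w$ in $L^2(\Omega)$, every subsequence admits a further subsequence converging almost everywhere, and continuity of $b$ then gives $b(w_{n_k})\to b(w)$ a.e., whence $|b(w_{n_k})-b(w)|^2|\grad w^\epsilon|^2\to0$ a.e. The Dominated Convergence Theorem yields convergence of the integral to zero along that further subsequence; since this holds for every subsequence, the full sequence of integrals converges to zero. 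Feeding this back into the displayed estimate gives $\|w_n^\epsilon-w^\epsilon\|_{H^1_0(\Omega)}\to0$, establishing the continuity of $T^\epsilon$.
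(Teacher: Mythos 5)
Your proposal is correct and follows essentially the same route as the paper: the same decomposition obtained by adding and subtracting $\alpha_\epsilon(x)b(w_n)\grad w^\epsilon$, testing with $v=w_n^\epsilon-w^\epsilon$, and the same coercivity/Cauchy--Schwarz estimate reducing everything to $\|(b(w_n)-b(w))\grad w^\epsilon\|_{L^2(\Omega)}\to0$. The only difference is that the paper disposes of this last step by citing a result of Artola--Duvaut (convergence in measure of $b(w_m)-b(w)$ plus the domination $|\grad w^\epsilon|^2\in L^1(\Omega)$), whereas you prove the same fact self-containedly via a.e.\ convergent subsequences and dominated convergence, which is a perfectly valid and arguably more transparent way to finish.
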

\begin{proof}
Let $\{w_m\}$ be a sequence in $L^2(\Omega)$ such that $w_m\rightarrow w$ strongly in $L^2(\Omega)$. Consider $T^\epsilon (w_m)=w^\epsilon_m $ and $T^\epsilon(w)=w^\e$. Then,
\begin{gather*}
\int_{\Omega}\alpha_\e(x)b(w_m)\grad w^\epsilon_m\cdot\grad v\,dx
=\int_{\Omega} fv\,dx\quad\text{for all } v \in H^1_0(\Omega),
\\
\int_\Omega\alpha_\e(x)b(w)\grad w^\e\cdot\grad v\,dx
=\int_{\Omega} fv\,dx\quad\text{for all } v \in H^1_0(\Omega).
\end{gather*}
Subtracting both equations, it follows that
\begin{gather*}
\int_{\Omega}\alpha_\e(x)b(w_m)\grad w_m^\e\cdot\grad v\,dx -
\int_{\Omega}\alpha_\e(x)b(w)\grad w^\epsilon\cdot\grad v\,dx=0\quad\text{for all }v \in H^1_0(\Omega).
 \end{gather*}
Adding and subtracting $w^\epsilon$ we gather that
\begin{equation*}
\int_\Omega\alpha_\e(x)b(w_m)\grad(w^\epsilon_m-w^\epsilon+w^\epsilon)\cdot\grad v\,dx =\int_\Omega\alpha_\e(x)b(w)\grad w^\epsilon\cdot\grad v\,dx
\quad\text{for all } v\in H^1_0(\Omega).
\end{equation*}
In an equivalent form, for each $v\in H^1_0(\Omega)$,
\begin{equation}\label{e:estwmw}
\int_{\Omega}\alpha_\e(x)b(w_m)\grad (w^\epsilon_m-w^\epsilon)\cdot\grad v\,dx
=\int_{\Omega}\alpha_{\epsilon}(x)(b(w)-b(w_m))\grad w^{\epsilon}\cdot\grad v\,dx.
\end{equation}
In particular, for $v=w^\epsilon_m-w^\epsilon$ it follows that
\begin{multline*}
\alpha_0 b_0\|\grad(w^\e_m-w^\e)\|_{0,\Omega}^2
\leq\int_{\Omega}\alpha_\e(x)b(w_m)\grad(w^\epsilon_m-w^\epsilon)\cdot\grad(w^\epsilon_m-w^\epsilon)
\\
=\int_{\Omega}\alpha_{\epsilon}(x)(b(w)-b(w_m))\grad w^{\epsilon}\cdot\grad(w^\epsilon_m-w^\epsilon)\,dx
\\
\leq\alpha_1\bigl\|[b(w)-b(w_m)]\grad w^\e\bigr\|_{0,\Omega}\|\grad(w^\e_m-w^\e)\|_{0,\Omega}
\end{multline*}
Thus, $\|\grad(w^\e_m-w^\e)\|_{0,\Omega}\leq C\bigl\|[b(w)-b(w_m)]\grad w^\e\bigr\|_{0,\Omega}$. Now~\cite{artola-duvaut}, since $b(w)-b(w_m)\to0$ in measure, and that $|\grad w^\e|^2\in L^1(\Omega)$, we conclude that $\bigl\|[b(w)-b(w_m)]\grad w^\e\bigr\|_{0,\Omega}\to0$. Thus $w^\e_m\to w^\e$ strongly in $H^1(\Omega)$.
\end{proof}
\begin{lemma}\label{lem3.2} Let $F\in C^1(\mathbb{R})$ such that $F(0)=0$ and $|F'(t)|\leq L$ for all $t\in\mathbb{R}$. Let $\Omega\subset\mathbb{R}^d$ be open, and let $1\leq p<\infty$. Then
\begin{itemize}
\item[a)] if $v\in W^{1,p}(\Omega)$, then $F \circ v \in W^{1,p}(\Omega)$ and  $\partial(F \circ v)/\partial x_i= F'(v)\partial v/\partial x_i$, for $1\leq i\leq d$
\item[b)] if $v\in W^{1,p}_{0}(\Omega)$, then $F \circ v\in W^{1,p}_{0}(\Omega)$.
\end{itemize}
\end{lemma}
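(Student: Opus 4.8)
The plan is to prove part (a) by the classical density-and-passage-to-the-limit argument for the Sobolev chain rule, and then to deduce part (b) from (a) together with the fact that $W^{1,p}_0(\Omega)$ is a closed subspace of $W^{1,p}(\Omega)$. For part (a) I would first record the integrability facts that make the candidate derivative meaningful. Since $F(0)=0$ and $|F'|\le L$, the mean value theorem gives $|F(t)|\le L|t|$, so that $|F(v)|\le L|v|$ and hence $F\circ v\in L^p(\Omega)$ whenever $v\in L^p(\Omega)$. Likewise $|F'(v)\,\partial v/\partial x_i|\le L\,|\partial v/\partial x_i|$, so the proposed derivative $g_i:=F'(v)\,\partial v/\partial x_i$ already lies in $L^p(\Omega)$. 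It then only remains to verify that $g_i$ is genuinely the weak $x_i$-derivative of $F\circ v$.

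To do this I would invoke the Meyers--Serrin theorem to choose $v_m\in C^\infty(\Omega)\cap W^{1,p}(\Omega)$ with $v_m\to v$ in $W^{1,p}(\Omega)$; passing to a subsequence I may further assume $v_m\to v$ and $\partial v_m/\partial x_i\to\partial v/\partial x_i$ almost everywhere. For the smooth functions the classical chain rule gives $\partial(F\circ v_m)/\partial x_i=F'(v_m)\,\partial v_m/\partial x_i$, so that for every $\phi\in C_c^\infty(\Omega)$,
\[
\int_\Omega (F\circ v_m)\,\frac{\partial\phi}{\partial x_i}\,dx
=-\int_\Omega F'(v_m)\,\frac{\partial v_m}{\partial x_i}\,\phi\,dx .
\]
On the left, $F\circ v_m\to F\circ v$ in $L^p(\Omega)$ because $|F(v_m)-F(v)|\le L|v_m-v|$, so the integral converges. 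On the right I would split
\[
F'(v_m)\frac{\partial v_m}{\partial x_i}-F'(v)\frac{\partial v}{\partial x_i}
=F'(v_m)\Bigl(\frac{\partial v_m}{\partial x_i}-\frac{\partial v}{\partial x_i}\Bigr)
+\bigl(F'(v_m)-F'(v)\bigr)\frac{\partial v}{\partial x_i}.
\]
The first term has $L^p$-norm at most $L\,\|\partial v_m/\partial x_i-\partial v/\partial x_i\|_{0,\Omega}\to0$, while the second tends to zero in $L^p$ by dominated convergence. Passing to the limit yields $\int_\Omega (F\circ v)\,\partial\phi/\partial x_i\,dx=-\int_\Omega g_i\,\phi\,dx$, which is exactly the claimed identity $\partial(F\circ v)/\partial x_i=g_i$; combined with the integrability above, this gives $F\circ v\in W^{1,p}(\Omega)$.

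For part (b) I would take $v_m\in C_c^\infty(\Omega)$ with $v_m\to v$ in $W^{1,p}(\Omega)$. Because $F(0)=0$, each $F\circ v_m$ vanishes outside the compact support of $v_m$, so $F\circ v_m$ is a $W^{1,p}$ function of compact support and therefore belongs to $W^{1,p}_0(\Omega)$. The very same two convergences used in part (a), namely $F\circ v_m\to F\circ v$ in $L^p(\Omega)$ and $F'(v_m)\,\partial v_m/\partial x_i\to F'(v)\,\partial v/\partial x_i$ in $L^p(\Omega)$, show that $F\circ v_m\to F\circ v$ in $W^{1,p}(\Omega)$. Since $W^{1,p}_0(\Omega)$ is closed in $W^{1,p}(\Omega)$, the limit $F\circ v$ lies in $W^{1,p}_0(\Omega)$.

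I expect the only genuine obstacle to be the passage to the limit in the gradient term, and specifically the convergence of $\bigl(F'(v_m)-F'(v)\bigr)\,\partial v/\partial x_i$ to zero in $L^p$. This is precisely where the two hypotheses on $F$ are decisive: continuity of $F'$ (from $F\in C^1$) supplies the almost-everywhere convergence $F'(v_m)\to F'(v)$ along the chosen subsequence, while the uniform bound $|F'(v_m)-F'(v)|\le 2L$ together with $|\partial v/\partial x_i|^p\in L^1(\Omega)$ supplies the dominating function needed to invoke dominated convergence. Everything else in the argument is routine.
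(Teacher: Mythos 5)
Your proof is correct, but it is worth noting how it relates to the paper: the paper gives no argument at all for this lemma --- its entire ``proof'' is the citation \cite{brezis}*{Proposition 9.5}, of which the statement is a transcription. What you have written is essentially the standard proof of that cited proposition, made self-contained: approximate by smooth functions, apply the classical chain rule, and pass to the limit using the Lipschitz bound on $F$ for the zeroth-order and first gradient terms and dominated convergence (with the uniform bound $2L$ as dominating factor) for the term $\bigl(F'(v_m)-F'(v)\bigr)\partial v/\partial x_i$. The only real difference from Brezis's own proof is the density theorem invoked: Brezis uses Friedrichs' theorem (restrictions of $C_c^\infty(\mathbb{R}^d)$ functions, whose gradients converge only in $L^p_{\mathrm{loc}}(\Omega)$), while you use Meyers--Serrin, which gives global $W^{1,p}(\Omega)$ convergence on an arbitrary open set and slightly streamlines the limit passage; either works, and yours buys a cleaner estimate at the cost of approximants without compact support (harmless here). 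Two minor points you could tighten: the a.e.\ convergence $F'(v_m)\to F'(v)$ holds only along the extracted subsequence, so in part (b) you should remark that convergence of a subsequence of $F\circ v_m$ already suffices because $W^{1,p}_0(\Omega)$ is closed; and the assertion that a compactly supported $W^{1,p}$ function belongs to $W^{1,p}_0(\Omega)$ deserves a one-line justification by mollification, though for your $C^1$ compactly supported functions $F\circ v_m$ this is routine. Neither is a genuine gap.
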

\begin{proof} \cite{brezis}*{Proposition 9.5}.
\end{proof}
\begin{lemma}\label{l:contpf}
Under the hypotheses of Theorem~\ref{t:eu}, the uniqueness of solutions for~\eqref{e:pnl} follows.
\end{lemma}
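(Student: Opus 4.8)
The plan is to exploit a Kirchhoff-type change of variable that turns the nonlinear flux $b(\ue)\grad\ue$ into a pure gradient, thereby reducing the uniqueness question to the same coercivity estimate one uses in the linear case. Concretely, I would introduce the primitive $B(t)=\int_0^t b(s)\,ds$. Since $b$ is continuous we have $B\in C^1(\R)$ with $B(0)=0$ and $B'=b$; moreover $b\in W^{2,\infty}(\R)$ forces $b$ to be bounded, say $|b(t)|\le b_1$, so that $|B'(t)|\le b_1$, while the hypothesis $b(t)\ge b_0>0$ gives $B'(t)=b(t)\ge b_0>0$. Thus $B$ satisfies the assumptions of Lemma~\ref{lem3.2} and is, in addition, strictly increasing, hence injective on $\R$.

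First I would suppose that $u_1,u_2\in H^1_0(\Omega)$ are two solutions of~\eqref{s:varnonlin.ms}. By part a) of Lemma~\ref{lem3.2} the chain rule applies, so $b(u_i)\grad u_i=\grad B(u_i)$ for $i=1,2$, and subtracting the two variational identities yields
\begin{equation*}
\int_\Omega\alpha_\epsilon(x)\,\grad\bigl[B(u_1)-B(u_2)\bigr]\cdot\grad v\,dx=0
\quad\text{for all }v\in H^1_0(\Omega).
\end{equation*}
Next, part b) of Lemma~\ref{lem3.2} guarantees $B(u_i)\in H^1_0(\Omega)$, so $w:=B(u_1)-B(u_2)$ is an admissible test function. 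Choosing $v=w$ and using the lower bound $\alpha_\epsilon(x)\ge\alpha_0>0$ gives $\alpha_0\|\grad w\|_{0,\Omega}^2\le0$, whence $\grad w=0$ almost everywhere; since $w\in H^1_0(\Omega)$, the Poincar\'e inequality then forces $w=0$, i.e. $B(u_1)=B(u_2)$ almost everywhere. Finally, the strict monotonicity (hence injectivity) of $B$ lets me conclude $u_1=u_2$ a.e., which is the desired uniqueness for~\eqref{e:pnl}.

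I expect the only delicate points to be the justification that $B(u_i)$ belongs to $H^1_0(\Omega)$ and the validity of the chain rule $\grad B(u_i)=b(u_i)\grad u_i$; both are supplied by Lemma~\ref{lem3.2}, provided one first records that $b$ is bounded so that the primitive $B$ has bounded derivative. Everything downstream --- the cancellation after subtraction, the coercivity estimate, and the passage from $B(u_1)=B(u_2)$ to $u_1=u_2$ through injectivity --- is then routine. The conceptual crux is really the observation that the primitive $B$ \emph{linearizes} the problem: once $b(u)\grad u$ is rewritten as $\grad B(u)$, the nonlinear equation behaves, for the purpose of uniqueness, exactly like a linear divergence-form equation with coefficient $\alpha_\epsilon$, and the strict positivity $b_0\le b$ is precisely what lets us invert the transformation at the end.
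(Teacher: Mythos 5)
Your proof is correct and follows essentially the same route as the paper: both introduce the Kirchhoff primitive $\tilde b(t)=\int_0^t b(s)\,ds$, use Lemma~\ref{lem3.2} to justify $\grad\tilde b(u)=b(u)\grad u$ with $\tilde b(u)\in H^1_0(\Omega)$, reduce uniqueness to the linear divergence-form problem, and conclude via injectivity of the strictly increasing primitive. The only cosmetic difference is that the paper invokes Lax--Milgram for uniqueness of the linear problem, whereas you unpack that step into the explicit coercivity/test-function estimate; you are in fact slightly more careful in recording that $b\in W^{2,\infty}(\R)$ yields the bounded derivative needed to apply Lemma~\ref{lem3.2}.
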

\begin{proof}
Let, for $t\in\mathbb{R}$,
\begin{equation*}
\tilde b(t)=\int_0^t\,b(s)ds.
\end{equation*}
Since $b\in C^0(\mathbb{R})$, then $\tilde b\in C^1(\mathbb{R})$. Moreover, $\tilde b'$ is always positive, and then $\tilde b$ is a bijection in $\mathbb{R}$. Consider the Kirchhoff transform $U_\e=\tilde b(\ue)$. From Lemma~\ref{lem3.2} we gather that
\begin{equation*}
\grad U_\e= b(\ue)\grad\ue
\end{equation*}
and $U_\epsilon\in H^1_0(\Omega)$. Thus,~\eqref{e:pnl} is equivalent to the linear problem
\begin{equation}\label{c:probmodelequivalin.ms}
\begin{gathered}
-\div[\alpha_\epsilon(x)\grad U_\epsilon]=f\quad\text{in }\Omega,
\\
U_\epsilon=0\quad\text{on }\partial\Omega.
\end{gathered}
\end{equation}
that is, $\ue $ solves~\eqref{e:pnl} is and only if $U_\epsilon$ solves~\eqref{c:probmodelequivalin.ms}.

From Lax-Milgram Lemma, there is at most one solution for~\eqref{c:probmodelequivalin.ms}, and therefore, there is also at most one solution for~\eqref{e:pnl}. Indeed, if there were two solutions for~\eqref{e:pnl}, we would be able to construct also two solutions for~\eqref{c:probmodelequivalin.ms}.
\end{proof}
We now prove Theorem~\ref{t:eu}.
\begin{proof}[Existence]
Consider in Theorem~\ref{t:sfp} that $A=E=L^2(\Omega)$, $C= H^1_0(\Omega)$, and the operator $T^{\epsilon}$ defined by~\eqref{operador_ponto_fixo_cont}. Then, from Lemma~\ref{lem3.1} we conclude that $T^{\epsilon}$ has a fixed point.
\end{proof}
\begin{proof}[Uniqueness] Follows from Lemma~\ref{l:contpf}.
\end{proof}

To show existence of the RFB solution, it is enough to pursue the same ideas just presented, but now considering the operator
\begin{equation*}
T_h^\epsilon:\,L^2(\Omega)\rightarrow V_r,
\end{equation*}
where, for a given $w\in L^2(\Omega)$, we define $w_r^\e=T_h^\epsilon(w)$ such that
\begin{equation*}
\int_\Omega\alpha_\epsilon(x)b(w)\grad w_r^\epsilon.\grad v\,dx
=\int_\Omega fv\,dx\quad\text{for all } v \in V_r.
\end{equation*}
As in Lemma~\ref{lem3.1}, the operator $T_h^\epsilon$ is continuous. The proof is basically the same, replacing $H_0^1(\Omega)$ by $V_r$.

\begin{remark} In~\cite{efenpank} the existence and uniqueness result for solutions for the MsFEM requires monotonicity. Such results were obtained~\cite{xu} without monotonicity assumptions, but under the condition that the discrete and exact solutions are close. We follow the same approach.
\end{remark}

To establish a uniqueness result, let $\L u=-\div[\alpha_\e(x)b(u)\grad u]$, and its Fréchet derivative in $u$ defined by
\[
 \L'(u)v=-\div\{\alpha_\e(x)\grad[b(u)v]\}=-\div\{\alpha_\e(x)[b(u)\grad v+b'(u)v\grad u].
\]
Consider also~\eqref{s:formanonlinear.ms} and
\[
 a'(u;v,\chi)=\int_\O\alpha_\e\grad[b(u)v]\cdot\grad\chi=\int_\O\alpha_\e[b(u)\grad v\cdot\grad\chi+b'(u)v\grad u\cdot\grad\chi],
\]
induced by $\L$ and $\L'$ respectively. From~\cite{PR}*{Theorem 6 and Remark 6}, it follows that $\L'(u)$ defines an isomorphism from $H_0^1(\O)$ in $H^{-1}(\O)$. Note that if $\chi=b(u)v$, then
\begin{equation*}
\sup_{\chi\in H_0^1(\O)}\frac{a'(u;v,\chi)}{\|\chi\|_1}\ge\frac{\int_\O\alpha_\e|\grad[b(u)v]|^2}{\|b(u)v\|_1}
\ge\alpha_0\|b(u)v\|_1\ge c(u)\|v\|_1.
\end{equation*}
Note also that
\begin{equation*}
|b(u)v|_1=\|b(u)\grad v + b'(u)v\grad u\|_0
\geq\|b(u)\grad v\|_0 - \| b'(u)v\grad u\|_0,
\end{equation*}
and, on the other hand, from Poincaré's inequality,
\begin{equation*}
\| b'(u)v\grad u\|_0 \leq \|\grad b(u)\|_{L^\infty(\O)}\|v\|_0
\leq C_\O\|\grad b(u)\|_{L^\infty(\O)}\|\grad v\|_0.
\end{equation*}
It is enough to consider then
\begin{equation*}
c(u)\ge\alpha_0(b_0-C_\O\|\grad b(u)\|_{L^\infty(\O)}).
\end{equation*}
Thus, for $\|u\|_{1,\infty}$ sufficiently small, $c(u)$ is positive.

In what follows, we consider the Galerkin projection $P_h:H_0^1(\O)\to V_r$ with respect to the bilinear form $\int_\O\aex b(\ue)\grad v\grad\chi\,dx$. Assume also that
\begin{equation*}
\|\chi-P_h\chi\|_{L^2(\O)}\le \hat c(h)\|\chi\|_{H^1(\O)},
\end{equation*}
where $\hat c(h)\to0$ independently of $\e$. This holds, for instance, if $\alpha(\cdot)$ is $\e$-periodic~\cite{CH-Y}.

Consider the following result.
\begin{lemma}
Let $u$ and $\tilde u\in H^1(\O)$. Then
\begin{equation}\label{e:coerc}
\bar c(u)\|v_h\|_1\le\sup_{\chi_h\in V_h}\frac{a'(\tilde u;v_h,\chi_h)}{\|\chi_h\|_1},
\end{equation}
where $\bar c(u)=c(u)-\hat c(h)-\|u-\tilde u\|_{1,\infty}\|b\|_{2,\infty}\|\alpha\|_{0,\infty}\|u\|_{1,\infty}$.
\end{lemma}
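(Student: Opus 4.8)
The plan is to obtain the discrete inf--sup bound~\eqref{e:coerc} as a double perturbation of the continuous inf--sup estimate for $a'(u;\cdot,\cdot)$ already established above, the two perturbations accounting for exactly the two negative terms in $\bar c(u)$. As in the continuous argument, I would bound the supremum from below by testing against the image under $P_h$ of the optimal continuous test function: given $v_h$, set $\chi=b(u)v_h\in H_0^1(\O)$ and take the explicit admissible candidate $\chi_h=P_h\chi$. Since a single competitor suffices to bound a supremum from below, it is enough to estimate $a'(\tilde u;v_h,\chi_h)$. Moreover $P_h$ is the orthogonal projection for the energy norm induced by $\int_\O\aex b(\ue)\grad\cdot\,\grad\cdot\,dx$, hence a contraction there, and that norm is equivalent to $\|\cdot\|_1$; so $\|\chi_h\|_1\le C\|\chi\|_1$ and it remains to bound $a'(\tilde u;v_h,\chi_h)$ below by $\bar c(u)\|v_h\|_1\|\chi\|_1$.

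First I would split
\[
a'(\tilde u;v_h,P_h\chi)=a'(u;v_h,\chi)-a'(u;v_h,\chi-P_h\chi)+\bigl[a'(\tilde u;v_h,P_h\chi)-a'(u;v_h,P_h\chi)\bigr].
\]
The first term is precisely the quantity treated in the continuous proof: with $\chi=b(u)v_h$ one has $a'(u;v_h,\chi)=\int_\O\aex|\grad[b(u)v_h]|^2\ge\alpha_0\|\chi\|_1^2\ge c(u)\|v_h\|_1\|\chi\|_1$, where the last inequality is the chain already displayed in the excerpt. The bracketed term is the linearization-point defect; using that $a'(w;\cdot,\cdot)$ depends on $w$ in a Lipschitz fashion through $b(w)$ and $b'(w)\grad w$, with $b\in W^{2,\infty}(\R)$ controlling $b'(\tilde u)-b'(u)$ and $\|u\|_{1,\infty}$ controlling $\grad u$, this bracket is bounded by $\|u-\tilde u\|_{1,\infty}\|b\|_{2,\infty}\|\alpha\|_{0,\infty}\|u\|_{1,\infty}\|v_h\|_1\|\chi\|_1$, i.e. the third term of $\bar c(u)$.

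The discretization defect $a'(u;v_h,\chi-P_h\chi)$ is where $\hat c(h)$ enters. Writing $\eta=\chi-P_h\chi\in H_0^1(\O)$ and expanding $a'(u;v_h,\eta)=\int_\O\aex b(u)\grad v_h\cdot\grad\eta+\int_\O\aex b'(u)v_h\grad u\cdot\grad\eta$, the principal (first) integral vanishes: it is the energy product of $v_h$ with the projection error $\eta$, which is orthogonal to the range of $P_h$ by definition, once one identifies the linearization point of the principal part with $\ue$ so that the weight $b(u)$ matches $b(\ue)$. Only the lower-order integral $\int_\O\aex b'(u)v_h\grad u\cdot\grad\eta$ survives, and it is to be bounded by $\hat c(h)\|v_h\|_1\|\chi\|_1$ through the hypothesis $\|\eta\|_{0,\O}\le\hat c(h)\|\chi\|_1$. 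Collecting the three contributions gives $a'(\tilde u;v_h,P_h\chi)\ge\bar c(u)\|v_h\|_1\|\chi\|_1$, and dividing by $\|\chi_h\|_1\le C\|\chi\|_1$ yields~\eqref{e:coerc}.

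I expect the main obstacle to be precisely this last step: the surviving integral carries $\grad\eta$, whereas the hypothesis controls only $\|\eta\|_{0,\O}$, so the derivative on $\eta$ must be transferred. A naive integration by parts would differentiate the oscillatory coefficient $\aex$ and reintroduce an $\e$-dependent factor, destroying the $\e$-independence of the constant; the argument must instead exploit the smoothness of $b'(u)\grad u$ (through $b\in W^{2,\infty}$ and $u\in W^{1,\infty}$) together with the structure of $P_h$, keeping $\aex$ undifferentiated, so that the defect is genuinely of order $\hat c(h)$ and independent of $\e$. This is the only delicate point; the coefficient-defect estimate and the contraction property of $P_h$ are routine.
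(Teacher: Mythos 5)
Your overall route is the same as the paper's: perturb the linearization point from $\tilde u$ back to $u$ (producing the term $\|u-\tilde u\|_{1,\infty}\|b\|_{2,\infty}\|\alpha\|_{0,\infty}\|u\|_{1,\infty}$), pass from continuous to discrete test functions through the Galerkin projection $P_h$ and its $H^1$-stability $\|P_h\chi\|_1\le c\|\chi\|_1$, and finish with the continuous bound $a'(u;v_h,b(u)v_h)\ge c(u)\|v_h\|_1\|b(u)v_h\|_1$ obtained from the test function $\chi=b(u)v_h$. Your coefficient-perturbation estimate is exactly the paper's $\delta$-estimate, and your observation that the Galerkin-orthogonality step requires the principal weight $b(u)$ to coincide with the projection weight $b(\ue)$ is a genuine subtlety which the paper itself glosses over (its lemma is stated for arbitrary $u\in H^1(\O)$ but only applied with $u=\ue$).

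The gap is in the discretization-defect term, and you have named it yourself: after orthogonality kills the principal part, you are left with $\int_\O\aex b'(u)v_h\grad u\cdot\grad(\chi-P_h\chi)\,dx$, which carries $\grad(\chi-P_h\chi)$, whereas the hypothesis controls only $\|\chi-P_h\chi\|_{0,\O}\le\hat c(h)\|\chi\|_{1,\O}$. Your closing paragraph concedes that you cannot transfer the derivative off the projection error without differentiating $\aex$; the sentence ``the argument must instead exploit the smoothness of $b'(u)\grad u$ together with the structure of $P_h$'' describes what is needed but is not an argument. Since everything else in the lemma is routine perturbation, the entire content is precisely the inequality $a'(u;v_h,P_h\chi)\ge a'(u;v_h,\chi)-\hat c(h)\|v_h\|_1\|\chi\|_1$, so the proposal is incomplete at its crux. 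Note that the paper does not prove this inequality either: it quotes it as \cite{xu}*{Lemma 2.2}, where the analogous bound for linearized forms with lower-order terms is established in the two-grid framework. The missing step is therefore a citable result rather than an open problem, and your argument would reach the same standard of completeness as the paper's if you invoked that lemma at this point instead of attempting to rederive it; as written, however, the attempt does not close the estimate, and the $\e$-uniformity issue you raise is exactly the question that the citation is being asked to settle.
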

\begin{proof}
To show~\eqref{e:coerc}, note that
\begin{multline*}
a'(\tilde u;v_h,\chi_h)
=\int_\O\alpha_\e[b(\tilde u)\grad v_h\cdot\grad\chi_h+b'(\tilde u)v_h\grad\tilde u\cdot\grad\chi_h]
\\
=\int_\O\alpha_\e\{b(u)\grad v_h\cdot\grad\chi_h
+b'(u)v_h\grad u\cdot\grad\chi_h\}
\\
+\int_\O\alpha_\e\{[b(\tilde u)-b(u)]\grad v_h\cdot\grad\chi_h
+[b'(\tilde u)\grad\tilde u-b'(u)\grad u]v_h\cdot\grad\chi_h\}
\\
\ge a'(u;v_h,\chi_h)-\delta\|v_h\|_1\|\chi_h\|_1
\end{multline*}
where
\[
 \delta=\|\alpha\|_{0,\infty}\|b\|_{2,\infty}\|\tilde u\|_{1,\infty}\|\tilde u-u\|_{1,\infty}.
\]

Observe that, from~\cite{xu}*{Lemma 2.2},
\[
 a'(u;v_h,P_h\chi)\ge a'(u;v_h,\chi)-\hat c(h)\|v_h\|_1\|\chi\|_1
\]
for all $\chi\in H_0^1(\O)$. Then
\begin{multline*}
 \sup_{\chi_h\in V_r}\frac{a'(\tilde u;v_h,\chi_h)}{\|\chi_h\|_1}
=\sup_{\chi\in H_0^1(\O)}\frac{a'(\tilde u;v_h,P_h\chi)}{\|P_h\chi\|_1}
\\
\ge c\sup_{\chi\in H_0^1(\O)}\frac{a'(u;v_h,P_h\chi)}{\|\chi\|_1}-\delta\|v_h\|_1
\ge\sup_{\chi\in H_0^1(\O)}\frac{a'(u;v_h,\chi)}{\|\chi\|_1}-(\hat c(h)+\delta)\|v_h\|_1
\\
\ge[c(u)-\hat c(h)-\delta]\|v_h\|_1\ge\bar c(u)\|v_h\|_1
\end{multline*}
for $\delta$ and $h$ sufficiently small. Above, we use the inequality $\|P_h\chi\|_1\le c\|\chi\|_1$.
\end{proof}
\begin{theorem}
Let $u_h$ and $\tilde u_h$ be two solutions for the  discrete problem such that
\[
 \|u-u_h\|_{1,\infty}+\|u-\tilde u_h\|_{1,\infty}\le\eta,
\]
where $\eta$ is small enough. Then $u_h=\tilde u_h$.
\end{theorem}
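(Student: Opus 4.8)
The plan is to reduce this nonlinear uniqueness question to the linear inf--sup estimate~\eqref{e:coerc}, applied at the linearization point $\tilde u=\tilde u_h$. Set $w_h=u_h-\tilde u_h\in V_r$. Since $u_h$ and $\tilde u_h$ both solve the discrete problem, $a(u_h,v_h)=a(\tilde u_h,v_h)=(f,v_h)$ for every $v_h\in V_r$, so $a(u_h,v_h)-a(\tilde u_h,v_h)=0$ for all $v_h\in V_r$. First I would expand this difference about $\tilde u_h$. Inserting $u_h=\tilde u_h+w_h$ into~\eqref{s:formanonlinear.ms} and Taylor expanding $b$ to first order, the difference splits as $a'(\tilde u_h;w_h,v_h)+\mathcal R(v_h)$, where the linear part is exactly the Fr\'echet form and the quadratic remainder is
\[
\mathcal R(v_h)=\int_\O\alpha_\e\{[b(u_h)-b(\tilde u_h)-b'(\tilde u_h)w_h]\grad\tilde u_h+[b(u_h)-b(\tilde u_h)]\grad w_h\}\cdot\grad v_h.
\]
Thus $a'(\tilde u_h;w_h,v_h)=-\mathcal R(v_h)$ for all $v_h\in V_r$.

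Next I would estimate the remainder using $b\in W^{2,\infty}(\R)$. The Taylor bound $|b(u_h)-b(\tilde u_h)-b'(\tilde u_h)w_h|\le\tfrac12\|b\|_{2,\infty}|w_h|^2$ together with $|b(u_h)-b(\tilde u_h)|\le\|b\|_{2,\infty}|w_h|$, after pulling the $L^\infty$ factors out of each integral, yields the crucial estimate $|\mathcal R(v_h)|\le C\,\|w_h\|_{1,\infty}\,\|w_h\|_1\,\|v_h\|_1$, with $C$ depending only on $\|\alpha\|_{0,\infty}$, $\|b\|_{2,\infty}$ and $\|\tilde u_h\|_{1,\infty}$. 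The decisive point is that one full power of $w_h$ is measured in the strong norm $\|\cdot\|_{1,\infty}$, leaving a single $\|w_h\|_1$ available for cancellation.

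Then I would apply~\eqref{e:coerc} with linearization point $\tilde u=\tilde u_h$ and test direction $v_h=w_h$, giving
\[
\bar c(u)\|w_h\|_1\le\sup_{\chi_h\in V_r}\frac{a'(\tilde u_h;w_h,\chi_h)}{\|\chi_h\|_1}=\sup_{\chi_h\in V_r}\frac{-\mathcal R(\chi_h)}{\|\chi_h\|_1}\le C\|w_h\|_{1,\infty}\|w_h\|_1.
\]
Using $\|w_h\|_{1,\infty}\le\|u-u_h\|_{1,\infty}+\|u-\tilde u_h\|_{1,\infty}\le\eta$ produces $(\bar c(u)-C\eta)\|w_h\|_1\le0$. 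It remains to verify the net constant is positive: the hypotheses give $c(u)>0$ for $\|u\|_{1,\infty}$ small, $\hat c(h)\to0$ as $h\to0$, and $\|u-\tilde u_h\|_{1,\infty}\le\eta$, so $\bar c(u)>0$; shrinking $\eta$ (and $h$) further makes $\bar c(u)-C\eta>0$. Hence $\|w_h\|_1=0$, i.e.\ $u_h=\tilde u_h$.

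The main obstacle is the mismatch of norms: the inf--sup constant controls $w_h$ only in $\|\cdot\|_1$, whereas the nonlinear remainder is genuinely quadratic in $w_h$. The argument closes precisely because the $W^{1,\infty}$-proximity of both discrete solutions to the exact solution $u$ lets one factor of the remainder be read as the small quantity $\|w_h\|_{1,\infty}\le\eta$, so that the quadratic term is absorbed into the coercive linear estimate. A secondary subtlety is ensuring $\bar c(u)$ is actually positive, which binds together the smallness of $\|u\|_{1,\infty}$, the mesh-independent decay $\hat c(h)\to0$, and the smallness of $\eta$.
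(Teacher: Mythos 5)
Your proof is correct, but it closes the argument by a genuinely different mechanism than the paper. Both proofs hinge on the discrete inf--sup estimate~\eqref{e:coerc}; the difference lies in how the nonlinearity is disposed of. You linearize at the single endpoint $\tilde u_h$, so the difference of the two discrete equations becomes $a'(\tilde u_h;w_h,\cdot)=-\mathcal R(\cdot)$ with a genuinely quadratic Taylor remainder, which you then estimate by $C\|w_h\|_{1,\infty}\|w_h\|_1\|\chi_h\|_1$ and absorb using $\|w_h\|_{1,\infty}\le\eta$; this costs you the slightly stronger smallness condition $\bar c(u)-C\eta>0$ and invokes $b\in W^{2,\infty}(\mathbb{R})$ directly in the Taylor bound. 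The paper instead integrates the derivative along the whole segment: writing $0=a(\tilde u_h,\chi_h)-a(u_h,\chi_h)=\int_0^1\tfrac{d}{dt}\,a(u_h+t(\tilde u_h-u_h),\chi_h)\,dt$, which is exactly $\int_0^1 a'(u_h+t(\tilde u_h-u_h);\tilde u_h-u_h,\chi_h)\,dt$, it applies~\eqref{e:coerc} at every point of the segment (all of which stay in the $\eta$-ball by convexity -- this is why the paper first records $\|u-u_h-t(\tilde u_h-u_h)\|\le\eta$), exchanges the supremum with the $t$-integral, and concludes $\bar c(u)\|u_h-\tilde u_h\|_1\le 0$ with no remainder term whatsoever, so that only $\bar c(u)>0$ is needed. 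The paper's fundamental-theorem-of-calculus device is thus the cleaner route -- the quadratic error you must fight against simply never appears -- while your endpoint linearization is more elementary and makes completely explicit where the $W^{2,\infty}$ regularity of $b$ and the smallness of $\eta$ enter; both arguments are valid and rest on the same hypotheses and the same key lemma.
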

\begin{proof}
Note that
\[
\|u-u_h-t(\tilde u_h-u_h)\|\le(1-t)\|u-u_h\|+t\|u-\tilde u_h\|\le\eta,
\]
for all $t\in[0,1]$. Let $\eta$ be small enough such that
\[
 \bar c(u)=c(u)-\hat c(h)-\eta\|b\|_{2,\infty}\|\alpha\|_{0,\infty}\|u\|_{1,\infty}>0.
\]
Then
\begin{multline*}
\bar c(u)\|u_h-\tilde u_h\|_1=\bar c(u)\int_0^1\|u_h-\tilde u_h\|_1\,dt
\\
\le \int_0^1\sup_{\chi_h\in V_h}\frac{a'(u_h+t(\tilde u_h-u_h);u_h-\tilde u_h,\chi_h)}{\|\chi_h\|_1}\,dt
\\
\le\sup_{\chi_h\in V_h}\frac{\int_0^1a'(u_h+t(\tilde u_h-u_h);u_h-\tilde u_h,\chi_h)\,dt}{\|\chi_h\|_1}
\\
=\sup_{\chi_h\in V_h}\frac{\int_0^1\frac d{dt}a(u_h+t(\tilde u_h-u_h),\chi_h)\,dt}{\|\chi_h\|_1}=0.
\end{multline*}
Since $\bar c(u)>0$, then $u_h=\tilde u_h$.
\end{proof}

\section{Best approximation result}\label{s:melaprox}
We establish here a Céa's Lemma type result for the Residual Free Bubble Method. The strategy to obtain such result is to consider a linearization $A(u_r;\cdot,\cdot)$ of~\eqref{s:formanonlinear.ms} centered at the ``enriched solution'' $u_r$. We consider then the following linear problem to find $w\in H_0^1(\O)$ such that  \begin{equation*}
A(u_r ;w,v)=(f,v)\quad\text{for all } v \in H^1_0(\Omega),
 \end{equation*}
where
\begin{equation*}
A(u_r;w,v)=\int_\O\aex\,b(u_r )\grad w\cdot\grad v\,dx.
\end{equation*}
Thus, $A(u_r;\cdot,\cdot)$ is coercive in $H_0^1(\O)$, since
\begin{equation}\label{e:coercA}
A(u_r;w,w)=\int_\O\aex\,b(u_r )|\grad w|^2\,dx
\ge C_\O\alpha_0b_0\|w\|_{1,\O}^2,
\end{equation}
where $C_\O$ is the Poincaré's constant.

We establish first the following identity.
\begin{lemma}
Given $v_r \in V_r$, the following identity holds
\begin{equation}\label{s:identidade.cea}
A(u_r; \ue-u_r,v_r)=A(u_\e; u^\e-u_r,v_r)=
\int_{\Omega}\alpha_{\epsilon}(x)[b(u_r)- b(\ue)]
\grad \ue\cdot\grad v_r\, dx.
\end{equation}
\end{lemma}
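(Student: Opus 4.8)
The plan is to read~\eqref{s:identidade.cea} as a nonlinear form of Galerkin orthogonality, whose engine is the observation that the exact solution $\ue$ and the enriched solution $u_r$ induce the \emph{same} functional $(f,\cdot)$ when tested against any $v_r\in V_r$. Since $V_r\subset H_0^1(\O)$, each $v_r$ is a legitimate test function in the continuous problem~\eqref{s:varnonlin.ms}, so
\[
A(\ue;\ue,v_r)=\int_\O\aex b(\ue)\grad\ue\cdot\grad v_r\,dx=(f,v_r),
\]
while the definition~\eqref{e:rfb} of $u_r$ gives $A(u_r;u_r,v_r)=(f,v_r)$. Subtracting these two relations is the crux; everything else is bilinearity of $A(w;\cdot,\cdot)$ in its last two slots together with linearity in the coefficient slot, namely $A(w_1;\phi,\psi)-A(w_2;\phi,\psi)=\int_\O\aex[b(w_1)-b(w_2)]\grad\phi\cdot\grad\psi\,dx$.

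For the right-hand equality I would expand $A(u_r;\ue-u_r,v_r)=A(u_r;\ue,v_r)-A(u_r;u_r,v_r)$, replace the second term by $(f,v_r)$ via~\eqref{e:rfb}, and then replace $(f,v_r)$ by $A(\ue;\ue,v_r)$ via the continuous problem. The surviving difference $A(u_r;\ue,v_r)-A(\ue;\ue,v_r)$ collapses, by the coefficient-slot identity above, to exactly $\int_\O\aex[b(u_r)-b(\ue)]\grad\ue\cdot\grad v_r\,dx$, which is the claimed right member. This step is purely algebraic once the two variational equations are in hand, and needs no fixed-point or regularity machinery.

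The middle member is meant to follow from the same two equations substituted in the opposite order: expanding $A(\ue;\ue-u_r,v_r)=A(\ue;\ue,v_r)-A(\ue;u_r,v_r)$, using the continuous problem for the first term and then $(f,v_r)=A(u_r;u_r,v_r)$, one is led to $\int_\O\aex[b(u_r)-b(\ue)]\grad u_r\cdot\grad v_r\,dx$. The step I expect to require the most care is reconciling this with the right member, since the two representations differ by $\int_\O\aex[b(u_r)-b(\ue)]\grad(\ue-u_r)\cdot\grad v_r\,dx$; establishing the full three-term chain as written forces one to argue that this remainder vanishes on $V_r$, which is \emph{not} a consequence of the two Galerkin relations alone. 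I would therefore scrutinize precisely this cancellation, and in particular check whether the intended statement pairs each endpoint with the matching gradient (that is, $\grad\ue$ for the $u_r$-centered form and $\grad u_r$ for the $\ue$-centered form), which is the version that follows cleanly from the argument above.
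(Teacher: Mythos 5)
Your derivation of the outer equality is exactly the paper's proof: expand $A(u_r;\ue-u_r,v_r)$ into two integrals, use the RFB relation~\eqref{e:rfb} to trade $A(u_r;u_r,v_r)$ for $(f,v_r)$, use the continuous problem~\eqref{s:varnonlin.ms} to trade $(f,v_r)$ for $A(\ue;\ue,v_r)$, and collapse the difference in the coefficient slot. Your suspicion about the middle member is also well founded, and it points at a genuine defect in the lemma as stated. The paper's entire treatment of that part is the sentence ``the proof of the second inequality is similar,'' but carrying out the similar computation --- continuous problem first, then the RFB relation --- produces
\[
A(\ue;\ue-u_r,v_r)=\int_\O\aex\,[b(u_r)-b(\ue)]\,\grad u_r\cdot\grad v_r\,dx,
\]
i.e.\ the mismatched-gradient version you derived, not the right member of~\eqref{s:identidade.cea}. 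The discrepancy between the two, $\int_\O\aex[b(u_r)-b(\ue)]\grad(\ue-u_r)\cdot\grad v_r\,dx$, is not forced to vanish by the two Galerkin relations, so the three-term chain as printed is not established; the correct statement pairs $\grad\ue$ with the $u_r$-centered form and $\grad u_r$ with the $\ue$-centered form, exactly as you propose. The slip is harmless downstream: in the proof of Proposition~\ref{l:cea}, display~\eqref{eq.cea} invokes only the equality between the first and third members of~\eqref{s:identidade.cea}, and that is precisely the part both you and the paper prove.
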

\begin{proof} Indeed,
\begin{multline*}
A(u_r; \ue-u_r,v_r)=\int_{\Omega}\alpha_{\epsilon}(x)b(u_r)\grad \ue.\grad v_r\, dx
-\int_{\Omega}\alpha_{\epsilon}(x)b(u_r) \grad u_r.\grad v_r\,dx
\\
= \int_{\Omega}\alpha_{\epsilon}(x)b(u_r)\grad \ue.\grad v_r \, dx
- \int_{\Omega} f v_r\,dx
\\
= \int_{\Omega}\alpha_{\epsilon}(x)b(u_r)\grad \ue.\grad
v_r \, dx -\int_{\Omega}\alpha_{\epsilon}(x)b(\ue)\grad
\ue.\grad v_r \, dx
\\
= \int_\O\alpha_\e(x)[b(u_r)-b(u^\e)]\grad u^\e.\grad v_r \, dx.
\end{multline*}
The proof of the second inequality is similar.
\end{proof}
We end the present section establishing a best approximation result in the enriched space $V_r$. This is a Céa's Lemma type result for the multiscale nonlinear problem~\cite{BS}. An advantage of the estimate is that it requires less regularity of $b(\cdot)$ than in~\cite{D-D}, cf. also Remark~\ref{r:dd}.

We often use H\"older's inequality
\[
 \int_\O fgh\,dx\le\|f\|_{L^3}\|g\|_{L^6}\|h\|_{L^2}
\le\|f\|_{0,\O}^{1/2}\|f\|_{1,\O}^{1/2}\|g\|_{L^6}\|h\|_{L^2}
\]
where we use also the continuous embedding $H^1(\O)\hookrightarrow L^6(\O)$ (for dimensions smaller than three).
\begin{proposition}\label{l:cea}
Let $\alpha_\epsilon(.)$ and $b(.)$ satisfying~\eqref{limitação.hipótesesH1} and~\eqref{limitação.hipótesesH2}, respectively. Then, for $\ue$ sufficiently small in $W^{1,6}(\Omega)$, it follows that \begin{equation}\label{e:cea}
\|\grad(\ue - u_r)\|_{0,\Omega} \leq C  \|\grad(\ue-w_r)\|_{0, \Omega}\quad\text{for all } w_r \in V_r.
\end{equation}
\end{proposition}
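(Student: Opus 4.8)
The plan is to treat the nonlinearity as a perturbation of the coercive linear form $A(u_r;\cdot,\cdot)$, exploiting the coercivity~\eqref{e:coercA} together with the identity~\eqref{s:identidade.cea}, and closing the argument by absorbing the perturbation using the smallness of $\grad\ue$ in $L^6$. Write $e=\ue-u_r\in H^1_0(\O)$ for the error and fix an arbitrary $w_r\in V_r$; since $V_r$ is a linear space, $u_r-w_r\in V_r$ is an admissible argument in~\eqref{s:identidade.cea}. First I would apply coercivity to $u_r-w_r$ and split it as $u_r-w_r=(u_r-\ue)+(\ue-w_r)$, obtaining
\begin{equation*}
C_\O\alpha_0 b_0\|u_r-w_r\|_{1,\O}^2\le A(u_r;u_r-w_r,u_r-w_r)=A(u_r;u_r-\ue,u_r-w_r)+A(u_r;\ue-w_r,u_r-w_r).
\end{equation*}
The second term is bounded at once, using $b\le\|b\|_{0,\infty}$ and $\aex\le\alpha_1$, by $\alpha_1\|b\|_{0,\infty}\|\grad(\ue-w_r)\|_{0,\O}\|u_r-w_r\|_{1,\O}$, which is already of the desired form $C\|\grad(\ue-w_r)\|_{0,\O}\|u_r-w_r\|_{1,\O}$.

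For the first term I would invoke~\eqref{s:identidade.cea} with $v_r=u_r-w_r$, which gives
\begin{equation*}
A(u_r;u_r-\ue,u_r-w_r)=-\int_\O\aex[b(u_r)-b(\ue)]\grad\ue\cdot\grad(u_r-w_r)\,dx.
\end{equation*}
Since $b\in W^{2,\infty}(\R)$ is Lipschitz, $|b(u_r)-b(\ue)|\le\|b\|_{2,\infty}|u_r-\ue|$, and I would estimate the integral with the Hölder splitting advertised before the statement, placing the three factors $|u_r-\ue|$, $|\grad\ue|$, $|\grad(u_r-w_r)|$ in $L^3$, $L^6$, $L^2$ respectively and interpolating the first:
\begin{equation*}
\Bigl|\int_\O\aex[b(u_r)-b(\ue)]\grad\ue\cdot\grad(u_r-w_r)\,dx\Bigr|
\le C\|u_r-\ue\|_{0,\O}^{1/2}\|u_r-\ue\|_{1,\O}^{1/2}\|\grad\ue\|_{L^6(\O)}\|\grad(u_r-w_r)\|_{0,\O}.
\end{equation*}
By Poincaré's inequality $\|u_r-\ue\|_{0,\O}^{1/2}\|u_r-\ue\|_{1,\O}^{1/2}\le C\|\grad e\|_{0,\O}$, so this term is bounded by $C\|\grad\ue\|_{L^6(\O)}\|\grad e\|_{0,\O}\|u_r-w_r\|_{1,\O}$. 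Here the hypothesis that $\ue$ is small in $W^{1,6}(\O)$ enters precisely: $\|\grad\ue\|_{L^6(\O)}$ is the small parameter.

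Combining the two bounds and cancelling one power of $\|u_r-w_r\|_{1,\O}$ yields
\begin{equation*}
\|u_r-w_r\|_{1,\O}\le C\|\grad\ue\|_{L^6(\O)}\|\grad e\|_{0,\O}+C\|\grad(\ue-w_r)\|_{0,\O}.
\end{equation*}
Finally I would use the triangle inequality $\|\grad e\|_{0,\O}\le\|\grad(\ue-w_r)\|_{0,\O}+\|\grad(w_r-u_r)\|_{0,\O}\le\|\grad(\ue-w_r)\|_{0,\O}+\|u_r-w_r\|_{1,\O}$, substitute the previous bound, and absorb the term $C\|\grad\ue\|_{L^6(\O)}\|\grad e\|_{0,\O}$ into the left-hand side. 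This absorption is the main obstacle and the reason for the smallness assumption: it is legitimate only when $\|\grad\ue\|_{L^6(\O)}$ is small enough that $C\|\grad\ue\|_{L^6(\O)}<1$, in which case $(1-C\|\grad\ue\|_{L^6(\O)})\|\grad e\|_{0,\O}\le C\|\grad(\ue-w_r)\|_{0,\O}$ gives~\eqref{e:cea} with a constant depending on $\|\grad\ue\|_{L^6(\O)}$ but, crucially, not on $h$ or $\e$. I expect the only technical care needed is tracking that every constant arising from the embedding $H^1(\O)\hookrightarrow L^6(\O)$, from Poincaré, and from $\|b\|_{2,\infty}$, $\alpha_0$, $\alpha_1$ is $h$- and $\e$-independent.
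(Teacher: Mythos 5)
Your proof is correct, and it rests on exactly the same three ingredients as the paper's: the coercivity \eqref{e:coercA} of the linearized form $A(u_r;\cdot,\cdot)$, the identity \eqref{s:identidade.cea}, and the $L^3\times L^6\times L^2$ H\"older/interpolation bound that converts $b(u_r)-b(\ue)$ into the small factor $\|\grad\ue\|_{L^6(\O)}$, which is absorbed at the end. The route differs in one structural choice. The paper applies coercivity directly to the error $\ue-u_r$ and splits the \emph{test} slot as $(\ue-w_r)+(w_r-u_r)$, invoking the identity on the piece $w_r-u_r\in V_r$; the absorption then happens in a single pass, directly at the level of $\|\grad(\ue-u_r)\|_{0,\O}$, with no triangle inequality needed. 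You instead apply coercivity to the discrete difference $u_r-w_r$, split the \emph{trial} slot as $(u_r-\ue)+(\ue-w_r)$, invoke the identity on the piece $u_r-\ue$, and recover the error by a final triangle inequality before absorbing. The two arguments are essentially dual and of equal strength: the paper's is one step shorter, while yours only ever tests coercivity on elements of $V_r$ and mirrors the structure the paper itself uses in the strict-monotonicity argument closing Section~\ref{s:melaprox} (bound $\|u_r-w_r\|_{H^1(\O)}$ first, then triangulate). In both cases the constants come only from $\alpha_0$, $\alpha_1$, $\|b\|_{W^{2,\infty}}$, Poincar\'e, and the Sobolev embedding, hence are independent of $h$ and $\e$, which is the point of the proposition.
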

\begin{proof}
Let $w_r\in V_r$. To establish~\eqref{e:cea}, compute
\begin{multline}\label{eq.cea}
A(u_r;\ue-u_r,\ue-u_r) =A(u_r;\ue -
u_r,\ue - w_r) + A(u_r;\ue - u_r,w_r - u_r)
\\
=\int_\O\alpha_\e\,b(u_r)\grad(u^\e-u_r)\cdot\grad(u^\e-w_r)\,dx
+ \int_\O\alpha_\e\,(b(u_r)-b(\ue))\grad u^\e\cdot\grad(w_r-u_r)\,dx
\end{multline}
using~\eqref{s:identidade.cea}. Denote by $I_1$, $I_2$ the first and second terms of~\eqref{eq.cea}. We now estimate each of these terms
\begin{equation*}
I_1=\int_\Omega\alpha_\e\,b(u_r)\grad(u^\e-u_r)\cdot\grad(\ue-w_r)\,dx
\leq c_1\|\grad(\ue-u_r)\|_{0,\Omega}
\|\grad(\ue-w_r)\|_{0,\Omega},
\end{equation*}
where $c_1:=\alpha_1\|b\|_\infty$. We estimate now $I_2$:
\begin{multline*}
I_2=\int_\Omega\alpha_\epsilon\,(b(u_r)-b(\ue))\grad
\ue .\grad(w_r-u_r)\,dx
\\
\leq\alpha_1\|b'\|_{\infty}\int_\Omega|u_r-\ue|\,|\grad
\ue| \,|\grad(w_r-u_r)|\,dx
\\
\leq\alpha_1\|b'\|_{\infty}\|\ur-\ue\|_{L^2(\O)}^{1/2}\|\ur-\ue\|_{H^1(\O)}^{1/2}
\|\grad\ue\|_{L^6(\O)}\|w_r-u_r\|_{1,\O}
\\
\leq\alpha_1\|b'\|_{\infty}\|\grad
\ue\|_{L^6(\O)}\|\ue-\ur\|_{H^1(\O)}
\bigl[\|\ue-\ur\|_{1,\O}+\|\ue-w_r\|_{1,\O}\bigr].
\end{multline*}
From~\eqref{e:coercA}, there exists $\beta>0$, independent of $\epsilon$, such that
\begin{gather*}
A(u_r;\ue-u_r,\ue-u_r)\geq\beta\|\grad(u^\e-u_r)\|^2_{0,\O}.
\end{gather*}
Moreover, from the estimates for $I_1$, $I_2$ in~\eqref{eq.cea}, we gather that
\begin{multline*}
\beta\,\|\grad(\ue-u_r)\|^2_{0,\Omega}\leq
c_1\|\grad(\ue-u_r)\|_{0,\Omega}\|\grad(\ue-w_r)\|_{0,\Omega}
\\
+\alpha_1\|b'\|_{\infty}\|\grad
\ue\|_{L^6(\O)}\|\ue-\ur\|_{H^1(\O)}
\bigl[\|\ue-\ur\|_{1,\O}+\|\ue-w_r\|_{1,\O}\bigr].
\end{multline*}
Thus
\begin{multline*}
\beta\,\|\grad(\ue-u_r)\|_{0,\Omega}\leq
\bigl(c_1+\alpha_1\|b'\|_{\infty}\|\grad\ue\|_{L^6(\O)}\bigr)\|\ue-w_r\|_{H^1(\Omega)}
\\
+
\alpha_1\|b'\|_{\infty}\|\grad\ue\|_{L^6(\O)}\|\ue-\ur\|_{1,\O},
\end{multline*}
and then
\begin{equation*}
\|\ue-u_r\|_{1,\Omega}
\bigl(\beta-\alpha_1\|b'\|_{\infty}\|\grad\ue\|_{L^6(\O)}\bigr)
\leq c_1\|\ue-w_r\|_{1,\O}.
\end{equation*}
\end{proof}
\begin{remark}
Proposition~\ref{l:cea} is important because the best approximation estimate is independent of $\e$, and shows in particular that the RFB method converges at least as well as the MsFEM since the RFB approximation spaces contains the spaces employed in the MsFEM. The choice of the approximation spaces is crucial here, since polynomial spaces with no bubbles added, a.k.a. classical Galerkin, yield a method that converges in $h$ albeit non-uniformly with respect to $\e$. 
\end{remark}

\begin{remark}\label{r:dd}
Dropping the ``small solution'' hypothesis,  (also present in~\cite{AV}), an analogous result holds. In particular, the estimate
\begin{equation*}
 \|\ue-\ur\|_{H^1(\O)} \leq C\bigl(\|\ue-w_r\|_{H^1(\O)}+\|\ue-\ur\|_{L^2(\O)}\bigr)
\quad\text{for all }w_r \in V_r
\end{equation*}
results from the above proof. An estimate for
$\|\ue-\ur\|_{H^1(\O)}$ was obtained
in~\cite{D-D}*{Theorem~1}, under extra regularity for $b(\cdot)$.
Following their proof, it is possible to show that
\begin{multline*}
\|\ue-\ur\|_{H^1(\O)} \leq C\|\ue-w_r\|_{H^1(\O)}
\biggl(1+\inf_{\tilde\chi\in
V_r}\|\phi-\tilde\chi\|_{H^1(\O)}+\|\ue-w_r\|_{H^1(\O)}^2\biggr)
\\
+C\|\ue-\ur\|_{L^2(\O)} \biggl(\inf_{\tilde\chi\in
V_r}\|\phi-\tilde\chi\|_{H^1(\O)}+\|\ue-\ur\|_{L^2(\O)}^2\biggr),
\end{multline*}
for all $w_r\in V_r$, where $\phi$ is the solution of a linear dual problem. It follows then that $\|\ue-\ur\|_{L^2(\O)}$ is small enough as long as the mesh size $h$ is small enough, and a best approximation  result follows. However, the compactness argument of~\cite{D-D} does not allow, in principle, the mesh size
to be independent of the small scales.
\end{remark}

Finally, strict monotonicity is also a sufficient condition for the best approximation result of Lemma~\cite{evans}, i.e,
\begin{equation*}
\int_\O\aex[b(v_r)\grad v_r-b(w_r)\grad w_r]\cdot\grad(v_r-w_r)\,dx\ge c\|v_r-w_r\|_{H^1(\O)}^2
\end{equation*}
for all $v_r$, $w_r\in V_r$. In this case,
\begin{multline*}
\|\ur-w_r\|_{H^1(\O)}^2
\le c\int_\O\aex[b(u_r)\grad u_r-b(w_r)\grad w_r]\cdot\grad(u_r-w_r)\,dx
\\
=c\int_\O\aex[b(\ue)\grad \ue-b(w_r)\grad
w_r]\cdot\grad(u_r-w_r)\,dx
\\
\le c\|b(\ue)\grad \ue-b(w_r)\grad w_r\|_{L^2(\O)}
\|\ur-w_r\|_{H^1(\O)}
\\
\le c\bigl(\|b(\ue)\grad \ue-b(w_r)\grad
\ue\|_{L^2(\O)} +\|b(w_r)\grad \ue-b(w_r)\grad
w_r\|_{L^2(\O)}\bigr) \|\ur-w_r\|_{H^1(\O)},
\end{multline*}
and we conclude that $\|\ur-w_r\|_{H^1(\O)}\le
c\|\ue-w_r\|_{H^1(\O)}$ for all $w_r\in V_r$. An estimate
as~\eqref{e:cea} follows from the triangle inequality.

\section{Possible Linearizations}\label{s:lineariz}
As in the original problem~\eqref{e:pnl}, the RFB
approximation~\eqref{e:rfb2}, or equivalently~\eqref{e:rfb}, is
still given by a nonlinear problem. We investigate here some ideas to linearize the problem. In the
next subsection, we investigate fixed point schemes, and in the
following subsection, we discuss a proposal named \emph{reduced
RFB}.

\subsection{Fixed point formulation}\label{ss:rfbpf}
A first idea to linearize the original problem~\eqref{e:pnl} is the following. Let $u_\e^0\in H_0^1(\Omega)$, and for $n\in\N$,  given $u_\e^{n-1}\in H_0^1(\Omega)$, compute $u_\e^n\in H_0^1(\Omega)$ as the solution of
\begin{equation}\label{e:interat}
\int_\Omega\alpha_\e(x)b(u_\e^{n-1})\grad(u_\e^n)\cdot\grad v\,dx
=\int_\Omega fv\,dx\quad\text{for all }v\in H_0^1(\Omega).
\end{equation}
In the context of the RFB method, we use~\eqref{e:rfb} to propose the following iterative scheme. Let $u_\e^0\in V_r$, and $n\in\N$. Given $u_r^{n-1}\in V_r$, compute $u_r^n\in V_r$ solution of
\begin{equation}\label{e:rfbinterat}
\int_\Omega\alpha_\e(x)b(u_r^{n-1})\grad(u_r^n)\cdot\grad v_r\,dx
=\int_\Omega fv_r\,dx\quad\text{for all }v_r\in V_r.
\end{equation}
Observe that the above scheme discretizes~\eqref{e:interat}. Hence, \emph{discretization} and \emph{linearization} commutes. Since the problem now is linear, we head back to the situation described in Remark~\ref{obslinear}.

We can also rewrite~\eqref{e:rfbinterat} in terms of global/local problems. Given $u_h^{n-1}\in V_h$ and $u_b^{n-1}\in V_b$, find $u_h^n\in V_h$ and $u_b^n\in V_b$ such that
\begin{equation}\label{e:rfbinteratexp}
\begin{gathered}
\int_\Omega\alpha_\e(x)b(u_h^{n-1}+u_b^{n-1})\grad(u_h^n+u_b^n)\cdot\grad v_h\,dx
=\int_\Omega fv_h\,dx,
\\
-\div[\alpha_\e(x)b(u_h^{n-1}+u_b^{n-1})\grad(u_h^n+u_b^n)]=f\quad\text{in }K,
\end{gathered}
\end{equation}
for all $v_h\in V_h$ and all $K\in\T_h$.

\begin{lemma}Given $u_\e^0\in H_0^1(\O)$ and $u_r^0\in V_r$, let $u_\e^n\in H_0^1(\O)$ and $u_r^n\in V_r$ be defined from~\eqref{e:interat} and~\eqref{e:rfbinterat} for $n\in\N$. Then $\lim_{n\to\infty}u_\e^n=u_\e$ and $\lim_{n\to\infty}u_r^n=u_r$ in $H_0^1(\O)$.
\end{lemma}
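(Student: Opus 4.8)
The plan is to read both iterations as fixed-point iterations for the solution operators already introduced, and then to establish a contraction estimate near the fixed point. Indeed, \eqref{e:interat} is exactly $u_\e^n = T^\e(u_\e^{n-1})$, and by Theorem~\ref{t:eu} the exact solution $u_\e$ is the unique fixed point, $T^\e(u_\e)=u_\e$; likewise \eqref{e:rfbinterat} reads $u_r^n = T_h^\e(u_r^{n-1})$, with $u_r$ its unique fixed point. Thus it suffices to show that each map contracts errors geometrically.

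First I would derive the recursion for the continuous error $e^n := u_\e^n - u_\e$. Subtracting the equation defining $u_\e^n$ from the one defining $u_\e$ --- the same manipulation as in the proof of Lemma~\ref{lem3.1}, adding and subtracting $b(u_\e^{n-1})\grad u_\e$ --- gives, for every $v\in H_0^1(\O)$,
\begin{equation*}
\int_\O\alpha_\e(x)\,b(u_\e^{n-1})\grad e^n\cdot\grad v\,dx
=\int_\O\alpha_\e(x)\,[b(u_\e)-b(u_\e^{n-1})]\grad u_\e\cdot\grad v\,dx.
\end{equation*}
Testing with $v=e^n$, using the uniform coercivity $\alpha_\e b\ge\alpha_0 b_0$ on the left and $|b(u_\e)-b(u_\e^{n-1})|\le\|b'\|_\infty|e^{n-1}|$ on the right, I obtain
\begin{equation*}
\alpha_0 b_0\,\|\grad e^n\|_{0,\O}^2
\le\alpha_1\|b'\|_\infty\int_\O|e^{n-1}|\,|\grad u_\e|\,|\grad e^n|\,dx.
\end{equation*}
Applying the H\"older inequality recorded before Proposition~\ref{l:cea} with factors in $L^3$, $L^6$, $L^2$, then the embedding $H^1(\O)\hookrightarrow L^3(\O)$ (valid in two dimensions) and Poincar\'e's inequality, yields $\|e^n\|_{1,\O}\le q\,\|e^{n-1}\|_{1,\O}$, where $q=C\,\alpha_1\|b'\|_\infty\,\|\grad u_\e\|_{L^6(\O)}/(\alpha_0 b_0)$.

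The key point, and the main obstacle, is forcing $q<1$. This is exactly where the hypothesis that $u_\e$ be sufficiently small in $W^{1,6}(\O)$ enters, in the same spirit as Proposition~\ref{l:cea}: for $\|\grad u_\e\|_{L^6(\O)}$ small enough, $q<1$, and then $\|e^n\|_{1,\O}\le q^n\|e^0\|_{1,\O}\to0$, which is the claimed convergence $u_\e^n\to u_\e$ in $H_0^1(\O)$. I would emphasize that smallness is required only of the true solution $u_\e$, which carries the $L^6$ factor, while the previous error $e^{n-1}$ is controlled in $L^3$ through the embedding; hence no a priori bound or extra regularity of the iterates themselves is needed.

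Finally, the RFB statement follows by repeating the argument verbatim with $H_0^1(\O)$ replaced by the enriched subspace $V_r$, the error $e_r^n:=u_r^n-u_r\in V_r$ used as test function (admissible since $V_r$ is a subspace and both the iterates and the fixed point lie in it), and $u_r$ playing the role of $u_\e$. The same coercivity lower bound $\alpha_0 b_0$ holds on $V_r$, cf.~\eqref{e:coercA}, so one arrives at $\|e_r^n\|_{1,\O}\le q_r\,\|e_r^{n-1}\|_{1,\O}$ with $q_r=C\,\alpha_1\|b'\|_\infty\,\|\grad u_r\|_{L^6(\O)}/(\alpha_0 b_0)$, and $q_r<1$ provided $u_r$ is small enough in $W^{1,6}(\O)$. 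Since the RFB solution $u_r$ is close to $u_\e$ by Proposition~\ref{l:cea}, this smallness is inherited from that of $u_\e$, completing the proof.
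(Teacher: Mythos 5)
There is a genuine gap: your contraction argument proves a different (and weaker) statement than the one claimed. The lemma as stated carries \emph{no} smallness hypothesis --- it asserts convergence of the iterates for arbitrary data and arbitrary initial guesses $u_\e^0$, $u_r^0$. Your proof hinges on forcing the contraction factor $q=C\,\alpha_1\|b'\|_\infty\|\grad\ue\|_{L^6(\O)}/(\alpha_0 b_0)$ below $1$, which requires $\ue$ sufficiently small in $W^{1,6}(\O)$; that hypothesis belongs to the \emph{next} lemma of the paper (the one establishing the geometric rate $\bar\alpha<1$), and your proposal is essentially a proof of that second lemma, not of this one. The paper's proof of the present lemma is instead a compactness argument that needs no smallness: coercivity alone gives the uniform bound $\|\grad u_\e^n\|_{0,\O}\le c\|f\|_{-1}$, hence a subsequence converging weakly in $H_0^1(\O)$ and strongly in $L^2(\O)$; dominated convergence yields $b(u_\e^{n-1})\grad v\to b(\bar u)\grad v$ in $L^2(\O)$, one passes to the limit in the variational equation, identifies $\bar u=\ue$ by the uniqueness theorem (so the whole sequence, not just a subsequence, converges), and finally upgrades weak to strong $H^1$ convergence by a norm-convergence trick. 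So while your error recursion and H\"older/embedding manipulations are correct as far as they go, they do not establish the stated result.

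There is also a second, independent flaw in your discrete step: you need $\|\grad\ur\|_{L^6(\O)}$ small and claim this is ``inherited'' from the smallness of $\ue$ via Proposition~\ref{l:cea}. That proposition only controls $\|\ue-\ur\|_{H^1(\O)}$ by the best-approximation error; closeness in $H^1$ does not control $\grad\ur$ in $L^6$ (one would need an inverse inequality on $V_r$ together with quantitative smallness of the $H^1$ error, or extra regularity), so even the conditional version of your RFB claim is unjustified as written. The paper sidesteps this entirely by running the same compactness argument inside the closed subspace $V_r$, identifying the limit with $u_r$ --- modulo uniqueness of the discrete solution, which the paper itself flags as an additional assumption at that point.
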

\begin{proof}
We first consider the continuous problem, for a fixed $\e$. Note that $\|\grad u_\e^n\|_0\le c\|f\|_{-1}$, and then $\|\grad u_\e^n\|_{0,\O}$ is bounded. Therefore, there exist $\bar u\in H_0^1(\O)$ and a subsequence of $u_\e^n$, indexed by $n\in\N$, but still denoted by $u_\e^n$, such that $u_\e^n$ weakly converges to $\bar u$ in $H_0^1(\O)$, with strong convergence in $L^2(\O)$. Thus, from the Lebesgue Dominated Convergence Theorem, $b(u_\e^n)\grad v\to b(\bar u)\grad v$ strongly in $L^2(\O)$, for all $v\in H_0^1(\O)$. Note also that  $\int_\O\grad(u_\e^n-\bar u)\cdot\btau\,dx\to0$ for all $\btau\in L^2(\O)$. Indeed, from Helmholtz decomposition, there exist $p\in H_0^1(\O)$, $q\in H^1(\O)$ such that $\btau=\grad p+\curl q$. Therefore,
\[
\int_\O\grad(u_\e^n-\bar u)\cdot\btau\,dx
=\int_\O\grad(u_\e^n-\bar u)\cdot\grad p\,dx\to0
\]
as $n\to\infty$. It follows from these results that, for all $v\in H_0^1(\O)$,
\begin{multline*}
\int_\O[b(u_\e^{n-1})\grad u_\e^n-b(\bar u)\grad\bar u]\grad v
\\
=\int_\O[b(u_\e^{n-1})-b(\bar u)]\grad u_\e^n\grad v
+\int_\O b(\bar u)[\grad u_\e^n-\grad\bar u]\grad v
\\
\le\|[b(u_\e^{n-1})-b(\bar u)]\grad v\|_0\|\grad u_\e^n\|_0+\int_\O b(\bar u)[\grad u_\e^n-\grad\bar u]\grad v.
\end{multline*}
Taking $n\to\infty$ we gather that
\begin{equation}\label{e:convprod}
\int_\O[b(u_\e^{n-1})\grad u_\e^n-b(\bar u)\grad\bar u]\grad v\to0.
\end{equation}
Thus
\[
0=\lim_{n\to\infty}\int_\O b(u_\e^{n-1})\grad u_\e^n\grad v-fv\,dx
=\int_\O b(\bar u)\grad\bar u\grad v-fv\,dx.
\]
Then $\bar u$ solves~\eqref{e:pnl}. From uniqueness of solutions, $\bar u=u_\e$, and the whole sequence, and not only a subsequence, $u_\e^n$ converges to $\bar u$.

To show that the convergence is actually strong,
note~\cite{MR1477663} that
\begin{multline*}
\|\ue^n-\bar u\|_{H^1(\O)} \le\int_\O\alpha_\e
b(\ue^{n-1})\grad(\ue^n-\bar
u)\cdot\grad(\ue^n-\bar u)\,dx
\\
=\int_\O\alpha_\e b(\ue^{n-1})\grad(\bar u)\cdot\grad(\bar
u-2\ue^n)\,dx+\int_\O\alpha_\e
b(\ue^{n-1})\grad(\ue^n)\cdot\grad(\ue^n)\,dx
\\
=\int_\O\alpha_\e b(\ue^{n-1})\grad(\bar u)\cdot\grad(\bar
u-2\ue^n)\,dx+\int_\O f\ue^n\,dx
\\
\to-\int_\O\alpha_\e b(\bar u)\grad(\bar u)\cdot\grad(\bar u)\,dx+\int_\O f\bar u\,dx
\end{multline*}
since~\eqref{e:convprod} holds. Thus the convergence
$\ue^n\to\bar u$ is strong in $H^1(\O)$.

The second part of the lemma, regarding the RFB approximation, follows from basically the same arguments. Since $\|\grad u_r^n\|_0\le c\|f\|_{-1}$, there exists $\bar u_r\in H_0^1(\O)$ and a subsequence still denoted by $u_r^n$ such that $u_r^n$ weakly converges to $\bar u_r$ in $H_0^1(\O)$, whereas strong convergence holds in $L^2(\O)$. Again, $b(u_r^n)\grad v\to b(\bar u_r)\grad v$ strongly in $L^2(\O)$, for all $v\in H_0^1(\O)$. Note also that $\int_\O\grad(u_r^n-\bar u_r)\cdot\btau\,dx\to0$ for all $\btau\in L^2(\O)$. Indeed, from Helmholtz decomposition, there exist $p\in H_0^1(\O)$, $q\in H^1(\O)$ such that $\btau=\grad p+\curl q$. Thus
\[
\int_\O\grad(u_r^n-\bar u_r)\cdot\btau\,dx
=\int_\O\grad(u_r^n-\bar u_r)\cdot\grad p\,dx\to0
\]
as $n\to\infty$. From these results, we gather that for all $v\in H_0^1(\O)$,
\begin{multline*}
\int_\O[b(u_r^{n-1})\grad u_r^n-b(\bar u_r)\grad\bar u_r]\grad v
\\
=\int_\O[b(u_r^{n-1})-b(\bar u_r)]\grad u_r^n\grad v
+\int_\O b(\bar u_r)[\grad u_r^n-\grad\bar u_r]\grad v
\\
\le\|[b(u_r^{n-1})-b(\bar u_r)]\grad v\|_0\|\grad u_r^n\|_0+\int_\O b(\bar u_r)[\grad u_r^n-\grad\bar u_r]\grad v.
\end{multline*}
Taking $n\to\infty$, it follows that $\int_\O[b(u_r^n)\grad u_r^n-b(\bar u_r)\grad\bar u_r]\grad v\to0$ for all $v\in H_0^1(\O)$. Considering now $v\in V_r$, we have that
\[
0=\lim_{n\to\infty}\int_\O b(u_r^{n-1})\grad u_r^n\grad v-fv\,dx
=\int_\O b(\bar u_r)\grad\bar u_r\grad v-fv\,dx.
\]
Since $V_r$ is closed, $\bar u_r\in V_r$. Therefore $\bar u_r=u_r$ solves~\eqref{e:rfb}. If uniqueness also holds, the whole sequence $u_\e^n$ converges to $\bar u$.
\end{proof}

\begin{lemma}Given $u_\e^0\in H_0^1(\O)$ and $u_r^0\in V_r$, let $u_\e^n\in H_0^1(\O)$ and $u_r^n\in V_r$ be defined by~\eqref{e:interat} and~\eqref{e:rfbinterat}, $n\in\N$. Then, if $\ue$ is sufficiently small in $W^{1,6}(\Omega)$, we have that
\[
\|\ue^n-\ue\|_{H^1(\O)}+\|\ur^n-\ur\|_{H^1(\O)}\le \bar\alpha\|\ue^{n-1}-\ue\|_{H^1(\O)},
\]
for $\bar\alpha<1$.
\end{lemma}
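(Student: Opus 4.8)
The plan is to show that each of the two frozen-coefficient iterations is a genuine contraction whose rate is $O(\|\grad\ue\|_{L^6(\O)})$, and then to dominate the enriched error $\|u_r^n-\ur\|_{H^1(\O)}$ by the coarse error $\|u_\e^{n-1}-\ue\|_{H^1(\O)}$, so that the single term on the right governs the whole left-hand side. Throughout I would use the natural common seed $u_r^0=u_\e^0\in V_r$, which couples the two sequences from the start.

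First I would establish the contraction for the continuous scheme. Subtracting the equation for $\ue$ from~\eqref{e:interat}, adding and subtracting $b(u_\e^{n-1})\grad\ue$, and testing with $v=u_\e^n-\ue$, the uniform coercivity used in~\eqref{e:coercA} bounds the left-hand side below by $C_\O\alpha_0b_0\|u_\e^n-\ue\|_{H^1(\O)}^2$, while the right-hand side reduces to $\int_\O\aex[b(\ue)-b(u_\e^{n-1})]\grad\ue\cdot\grad(u_\e^n-\ue)\,dx$. Using $|b(\ue)-b(u_\e^{n-1})|\le\|b'\|_\infty|u_\e^{n-1}-\ue|$ together with the H\"older/Sobolev chain displayed before Proposition~\ref{l:cea}, this term is at most $\alpha_1\|b'\|_\infty\|\grad\ue\|_{L^6(\O)}\|u_\e^{n-1}-\ue\|_{H^1(\O)}\|u_\e^n-\ue\|_{H^1(\O)}$. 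Cancelling one factor gives
\[
\|u_\e^n-\ue\|_{H^1(\O)}\le\lambda\,\|u_\e^{n-1}-\ue\|_{H^1(\O)},\qquad \lambda=\frac{\alpha_1\|b'\|_\infty}{C_\O\alpha_0b_0}\,C\,\|\grad\ue\|_{L^6(\O)},
\]
with $\lambda<1$ once $\ue$ is small in $W^{1,6}(\O)$. The same computation applied to~\eqref{e:rfbinterat} (testing with $v_r=u_r^n-\ur\in V_r$) yields $\|u_r^n-\ur\|_{H^1(\O)}\le\mu\,\|u_r^{n-1}-\ur\|_{H^1(\O)}$ with $\mu=O(\|\grad\ur\|_{L^6(\O)})$; since $\ur$ approximates $\ue$ in the sense of Section~\ref{s:melaprox}, $\|\grad\ur\|_{L^6(\O)}$ is controlled by $\|\grad\ue\|_{L^6(\O)}$, so $\mu$ is small as well.

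The decisive step, and the reason only the coarse error survives on the right, is to replace $\|u_r^{n-1}-\ur\|$ by $\|u_\e^{n-1}-\ue\|$ rather than summing two independent contractions. Here I would track the gap $g^m=u_r^m-u_\e^m$ between the two iterate sequences. Since~\eqref{e:interat} holds on all of $H_0^1(\O)$, it holds in particular against every $v_r\in V_r$, so $\int_\O\aex[b(u_r^{m-1})\grad u_r^m-b(u_\e^{m-1})\grad u_\e^m]\cdot\grad v_r=0$ for $v_r\in V_r$. Testing this against $u_r^m-P_r u_\e^m$, where $P_r$ denotes the Galerkin projection onto $V_r$ for the $b(u_r^{m-1})$-weighted form, the projection orthogonality removes the consistency part and leaves a recursion $\|g^m\|_{H^1(\O)}\le\rho\,\|g^{m-1}\|_{H^1(\O)}+C\hat c(h)$, with $\rho=O(\|\grad\ue\|_{L^6(\O)})<1$ and $\hat c(h)$ the best-approximation quantity of Section~\ref{s:eus}. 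As $g^0=0$, this gives $\|g^m\|_{H^1(\O)}\le C\hat c(h)/(1-\rho)$ uniformly in $m$, whence
\[
\|u_r^{n-1}-\ur\|_{H^1(\O)}\le\|u_\e^{n-1}-\ue\|_{H^1(\O)}+\|g^{n-1}\|_{H^1(\O)}+\|\ue-\ur\|_{H^1(\O)}.
\]
Feeding this into the RFB contraction and adding the continuous one, both iterate errors are governed by $\|u_\e^{n-1}-\ue\|_{H^1(\O)}$, and with the bubble submesh chosen fine enough that the $O(\hat c(h))$ remainder is absorbed into the constant one obtains
\[
\|u_\e^n-\ue\|_{H^1(\O)}+\|u_r^n-\ur\|_{H^1(\O)}\le\bar\alpha\,\|u_\e^{n-1}-\ue\|_{H^1(\O)},\qquad \bar\alpha=\lambda+C\mu<1.
\]
The main obstacle is precisely this domination of the enriched error by the coarse error: it is where the smallness of $\ue$ in $W^{1,6}(\O)$, the common initialization, and the best-approximation result of Section~\ref{s:melaprox} must be used simultaneously, and where the delicate point is to keep the best-approximation floor $\hat c(h)$ from spoiling the single-term right-hand side uniformly in $n$.
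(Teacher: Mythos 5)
Your first paragraph coincides with the paper's entire proof: the paper subtracts the equations for $\ue^n$ and $\ue$, tests with $\ue^n-\ue$, and applies the H\"older/Sobolev chain to obtain $\|\ue^n-\ue\|_{H^1(\O)}\le c\,\|\grad\ue\|_{L^6(\O)}\|\ue^{n-1}-\ue\|_{H^1(\O)}$ (up to the interpolated $L^2$/$H^1$ factors), and then simply states that ``the result for $\ur^n$ is analogous,'' i.e.\ the self-contained contraction $\|\ur^n-\ur\|_{H^1(\O)}\le\bar\alpha\|\ur^{n-1}-\ur\|_{H^1(\O)}$. In particular, the paper never proves the statement as literally printed, with only $\|\ue^{n-1}-\ue\|_{H^1(\O)}$ on the right; since \eqref{e:interat} and \eqref{e:rfbinterat} are uncoupled recursions, the literal inequality is false for arbitrary seeds (take $u_\e^0=\ue$, so the right-hand side vanishes at $n=1$, while $u_r^0\in V_r$ is arbitrary and $u_r^1\ne\ur$ in general), and the printed right-hand side is best read as missing the term $\|\ur^{n-1}-\ur\|_{H^1(\O)}$. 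You correctly sensed that the literal statement would need a coupling argument — but your coupling does not close.

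The failure is in the absorption step. Your gap recursion $\|g^m\|_{H^1(\O)}\le\rho\|g^{m-1}\|_{H^1(\O)}+C\hat c(h)$ yields at best the fixed floor $C\hat c(h)/(1-\rho)$, and your triangle inequality adds the second fixed floor $\|\ue-\ur\|_{H^1(\O)}$; both are positive constants for fixed $h$, independent of $n$. Meanwhile $\|\ue^{n-1}-\ue\|_{H^1(\O)}\le\lambda^{n-1}\|\ue^0-\ue\|_{H^1(\O)}$ decays geometrically, so the inequality $\mu\bigl(C\hat c(h)/(1-\rho)+\|\ue-\ur\|_{H^1(\O)}\bigr)\le(\bar\alpha-\lambda-\mu)\|\ue^{n-1}-\ue\|_{H^1(\O)}$, which your final display requires, fails for all large $n$: an additive, $n$-independent remainder can never be ``absorbed into the constant'' of a multiplicative bound that must hold uniformly in $n$. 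Two further points. First, the remainder in your recursion is mislabeled: Galerkin orthogonality of $P_r$ leaves the $H^1$ best-approximation error of $u_\e^m$ in $V_r$, not the $L^2$ quantity $\hat c(h)\|\cdot\|_{H^1(\O)}$ of Section~\ref{s:eus}. Second, your rate $\mu=O(\|\grad\ur\|_{L^6(\O)})$ needs $W^{1,6}$ control of the discrete solution, which Proposition~\ref{l:cea} (an $H^1$ estimate) does not supply — a gloss the paper's ``analogous'' admittedly shares; it can be avoided by subtracting instead the equation for $\ue$ tested against $V_r$, so that only $\grad\ue\in L^6(\O)$ enters, at the price of an additional $\|\ue-\ur\|_{H^1(\O)}$ term. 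You also quietly add the hypothesis $u_r^0=u_\e^0\in V_r$, which is not in the statement. The defensible conclusion of your method, and what the paper's two-line proof actually establishes, is the pair of separate contractions $\|\ue^n-\ue\|_{H^1(\O)}\le\bar\alpha\|\ue^{n-1}-\ue\|_{H^1(\O)}$ and $\|\ur^n-\ur\|_{H^1(\O)}\le\bar\alpha\|\ur^{n-1}-\ur\|_{H^1(\O)}$, not the printed mixed inequality.
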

\begin{proof}
Note that
\begin{multline*}
\|\ue^n-\ue\|_{H^1(\O)}^2
\le\int_\O\aex b(\ue^{n-1})\grad(\ue^n-\ue)\grad(\ue^n-\ue)\,dx
\\
=\int_\O\aex[-b(\ue^{n-1})+b(\ue)]\grad\ue\grad(\ue^n-\ue)\,dx
\\
\le c\|\ue^{n-1}-\ue\|_{L^2(\O)}^{1/2}\|\ue^{n-1}-\ue\|_{H^1(\O)}^{1/2}
\|\grad\ue\|_{L^6(\O)}\|\ue^n-\ue\|_{H^1(\O)}.
\end{multline*}
The result for $\ur^n$ is analogous.
\end{proof}

We end this subsection with an alternative linearization proposal, based on~\eqref{e:rfbinteratexp}. Given $u_h^{n-1}\in V_h$ and $u_b^{n-1}\in V_b$, find $u_h^n\in V_h$ and $u_b^n\in V_b$ such that
\begin{gather}
\int_\Omega\alpha_\e(x)b(u_h^{n-1}+u_b^{n-1})\grad(u_h^n+u_b^{n-1})
\cdot\grad v_h\,dx=\int_\Omega fv_h\,dx, \label{e:rfbinteratexpi}
\\
-\div[\alpha_\e(x)b(u_h^n+u_b^{n-1})\grad(u_h^n+u_b^n)]=f
\label{e:rfbinteratexpii}
\quad\text{in }K,
\end{gather}
for all $v_h\in V_h$ and all $K\in\T_h$. Observe that the above system is not coupled as in~\eqref{e:rfbinteratexp}. It is possible to solve first~\eqref{e:rfbinteratexpi} and only then solve~\eqref{e:rfbinteratexpii}.

\subsection{Reduced Residual Free Bubble Formulation}\label{ss:rfbred}
The idea here is to use the approximation $b(u_h+u_b)\approx b(u_h)$ at the local problem of the second equation in~\eqref{e:rfb2}. This induces a linearization that makes static condensation possible. In this case, we search for the approximation $\tilde\ur=\tilde u_h+\tilde u_b\in V_r$ such that
\begin{gather}
\int_\Omega\alpha_\e(x)b(\tilde u_h+\tilde u_b)\grad(\tilde u_h+\tilde u_b)\cdot\grad v_h\,dx
=\int_\Omega fv_h\,dx,\nonumber
\\
-\div[\alpha_\e(x)b(\tilde u_h)\grad\tilde u_b]
=f+\div[\alpha_\e(x)b(\tilde u_h)\grad\tilde u_h]
\quad\text{in }K, \label{e:rfblr}
\end{gather}
for all $v_h\in V_h$ and all $K\in\T_h$. Thus, the local problem~\eqref{e:rfblr} is linear with respect to $\tilde u_b$.

\begin{remark} Since~\eqref{e:rfblr} is linear, we can split $\tilde u_b=\tilde u_b^l+\tilde u_b^f$ in two parts, each solving~\eqref{e:rfblr} with $f$ and  $\div[\alpha_\e(x)b(\tilde u_h)\grad\tilde u_h]$ on the right hand side. However, the local and global problems are still coupled. The local problems for the MsFEM involve $\tilde u_b^l$ only, and to make the method cheaper, it is possible to replace $b(u_h)$ by $b(\int_Ku_h(x)\,dx)$, as in~\cite{E-H-G}, or by $(b(u_h(x_K)))$ as in~\cite{CH-Y}, where $x_K$ is an interior point of the element. In this way,~\eqref{e:rfblr} reduces to a much simpler equation, given by
\begin{equation*}
-\div[\aex\grad\tilde\ub^l]= \div[\aex\grad\tilde\uh] \quad \text{in } K.
\end{equation*}
From the equation linearity, the computation of the local bubble $u_b^l$ is determined solving the corresponding problems associated to the basis functions.

However, such simplification is not possible for the RFB method, due to the presence of the $\tilde u_b^f$ term. Such extra term is important since it can significantly improve the quality of the approximation~\cites{MR2203943,MR2142535,SG} in some situations.
\end{remark}

\begin{bibdiv}
\begin{biblist}

\bib{AV}{article}{
   author={Abdulle, Assyr},
   author={Vilmart, Gilles},
   title={A priori error estimates for finite element methods with numerical
   quadrature for nonmonotone nonlinear elliptic problems},
   journal={Numer. Math.},
   volume={121},
   date={2012},
   number={3},
   pages={397--431},
   issn={0029-599X},
   review={\MR{2929073}},
   doi={10.1007/s00211-011-0438-4},
}

\bib{MR3143841}{article}{
   author={Araya, Rodolfo},
   author={Harder, Christopher},
   author={Paredes, Diego},
   author={Valentin, Fr{\'e}d{\'e}ric},
   title={Multiscale hybrid-mixed method},
   journal={SIAM J. Numer. Anal.},
   volume={51},
   date={2013},
   number={6},
   pages={3505--3531},
   issn={0036-1429},
   review={\MR{3143841}},
   doi={10.1137/120888223},
}

\bib{A-L}{article}{
   author={J.-L. Auriault},
   author={J. Lewandowska},
   title={Homogenization analysis of diffusion and adsorption macrotransport in porous media: macrotransport in the absence of advection},
   journal={Géotechnique},
   volume={ 43},
   date={01 September 1993},
   number={3},
   pages={457--469},
   issn={0016-8505},
}

\bib{artola-duvaut}{article}{
   author={Artola, Michel},
   author={Duvaut, Georges},
   title={Un r\'esultat d'homog\'en\'eisation pour une classe de probl\`emes
   de diffusion non lin\'eaires stationnaires},
   language={French, with English summary},
   journal={Ann. Fac. Sci. Toulouse Math. (5)},
   volume={4},
   date={1982},
   number={1},
   pages={1--28},
   issn={0240-2955},
   review={\MR{673637 (84j:35020)}},
}

\bib{allaire}{article}{
   author={Allaire, Gr{\'e}goire},
   title={Homogenization and two-scale convergence},
   journal={SIAM J. Math. Anal.},
   volume={23},
   date={1992},
   number={6},
   pages={1482--1518},
   issn={0036-1410},
   review={\MR{1185639 (93k:35022)}},
   doi={10.1137/0523084},
}

\bib{MR1286212}{article}{
   author={Babu{\v{s}}ka, Ivo},
   author={Caloz, Gabriel},
   author={Osborn, John E.},
   title={Special finite element methods for a class of second order
   elliptic problems with rough coefficients},
   journal={SIAM J. Numer. Anal.},
   volume={31},
   date={1994},
   number={4},
   pages={945--981},
   issn={0036-1429},
   review={\MR{1286212 (95g:65146)}},
   doi={10.1137/0731051},
}

\bib{MR701094}{article}{
   author={Babu{\v{s}}ka, I.},
   author={Osborn, J. E.},
   title={Generalized finite element methods: their performance and their
   relation to mixed methods},
   journal={SIAM J. Numer. Anal.},
   volume={20},
   date={1983},
   number={3},
   pages={510--536},
   issn={0036-1429},
   review={\MR{701094 (84h:65076)}},
   doi={10.1137/0720034},
}

\bib{MR1222297}{article}{
   author={Baiocchi, Claudio},
   author={Brezzi, Franco},
   author={Franca, Leopoldo P.},
   title={Virtual bubbles and Galerkin-least-squares type methods (Ga.L.S.)},
   journal={Comput. Methods Appl. Mech. Engrg.},
   volume={105},
   date={1993},
   number={1},
   pages={125--141},
   issn={0045-7825},
   review={\MR{1222297 (94g:65058)}},
   doi={10.1016/0045-7825(93)90119-I},
}

\bib{BLP}{book}{
   author={Bensoussan, Alain},
   author={Lions, Jacques-Louis},
   author={Papanicolaou, George},
   title={Asymptotic analysis for periodic structures},
   series={Studies in Mathematics and its Applications},
   volume={5},
   publisher={North-Holland Publishing Co.},
   place={Amsterdam},
   date={1978},
   pages={xxiv+700},
   isbn={0-444-85172-0},
   review={\MR{503330 (82h:35001)}},
}

\bib{BS}{book}{
   author={Brenner, Susanne C.},
   author={Scott, L. Ridgway},
   title={The mathematical theory of finite element methods},
   series={Texts in Applied Mathematics},
   volume={15},
   edition={3},
   publisher={Springer},
   place={New York},
   date={2008},
   pages={xviii+397},
   isbn={978-0-387-75933-3},
   review={\MR{2373954 (2008m:65001)}},
   doi={10.1007/978-0-387-75934-0},
}

\bib{boccardo-murat}{article}{
   author={Boccardo, Lucio},
   author={Murat, Fran{\c{c}}ois},
   title={Remarques sur l'homog\'en\'eisation de certains probl\`emes
   quasi-lin\'eaires},
   language={French, with English summary},
   journal={Portugal. Math.},
   volume={41},
   date={1982},
   number={1-4},
   pages={535--562 (1984)},
   issn={0032-5155},
   review={\MR{766874 (86a:35022)}},
}

\bib{brezis}{book}{
   author={Brezis, Haim},
   title={Functional analysis, Sobolev spaces and partial differential
   equations},
   series={Universitext},
   publisher={Springer, New York},
   date={2011},
   pages={xiv+599},
   isbn={978-0-387-70913-0},
   review={\MR{2759829 (2012a:35002)}},
}

\bib{brezzi}{article}{
   author={Brezzi, F.},
   title={Interacting with the subgrid world},
   conference={
      title={Numerical analysis 1999 (Dundee)},
   },
   book={
      series={Chapman \& Hall/CRC Res. Notes Math.},
      volume={420},
      publisher={Chapman \& Hall/CRC, Boca Raton, FL},
   },
   date={2000},
   pages={69--82},
   review={\MR{1751112 (2001b:65121)}},
}

\bib{MR1159592}{article}{
   author={Brezzi, Franco},
   author={Bristeau, Marie Odile},
   author={Franca, Leopoldo P.},
   author={Mallet, Michel},
   author={Rog{\'e}, Gilbert},
   title={A relationship between stabilized finite element methods and the
   Galerkin method with bubble functions},
   journal={Comput. Methods Appl. Mech. Engrg.},
   volume={96},
   date={1992},
   number={1},
   pages={117--129},
   issn={0045-7825},
   review={\MR{1159592 (92k:76056)}},
   doi={10.1016/0045-7825(92)90102-P},
}

\bib{B-R}{article}{
   author={Brezzi, Franco},
   author={Russo, Alessandro},
   title={Choosing bubbles for advection-diffusion problems},
   journal={Math. Models Methods Appl. Sci.},
   volume={4},
   date={1994},
   number={4},
   pages={571--587},
   issn={0218-2025},
   review={\MR{1291139 (95h:76079)}},
   doi={10.1142/S0218202594000327},
}

\bib{MR1477663}{book}{
   author={Ciarlet, Philippe G.},
   title={Mathematical elasticity. Vol. II},
   series={Studies in Mathematics and its Applications},
   volume={27},
   note={Theory of plates},
   publisher={North-Holland Publishing Co., Amsterdam},
   date={1997},
   pages={lxiv+497},
   isbn={0-444-82570-3},
   review={\MR{1477663 (99e:73001)}},
}

\bib{chen}{article}{
   author={Chen, Zhangxin},
   title={Multiscale methods for elliptic homogenization problems},
   journal={Numer. Methods Partial Differential Equations},
   volume={22},
   date={2006},
   number={2},
   pages={317--360},
   issn={0749-159X},
   review={\MR{2201437 (2007b:65117)}},
   doi={10.1002/num.20099},
}

\bib{CH-Y}{article}{
   author={Chen, Zhangxin},
   author={Savchuk, Tatyana Y.},
   title={Analysis of the multiscale finite element method for nonlinear and
   random homogenization problems},
   journal={SIAM J. Numer. Anal.},
   volume={46},
   date={2007/08},
   number={1},
   pages={260--279},
   issn={0036-1429},
   review={\MR{2377263 (2008k:35020)}},
   doi={10.1137/060654207},
}

\bib{diaz-naulin}{article}{
   author={D{\'{\i}}az, Lolimar},
   author={Naulin, Ra{\'u}l},
   title={A proof of the Schauder-Tychonoff theorem},
   language={English, with English and Spanish summaries},
   journal={Divulg. Mat.},
   volume={14},
   date={2006},
   number={1},
   pages={47--57},
   issn={1315-2068},
   review={\MR{2586360 (2010k:47118)}},
}

\bib{D-D}{article}{
   author={Douglas, Jim, Jr.},
   author={Dupont, Todd},
   title={A Galerkin method for a nonlinear Dirichlet problem},
   journal={Math. Comp.},
   volume={29},
   date={1975},
   pages={689--696},
   issn={0025-5718},
   review={\MR{0431747 (55 \#4742)}},
}

\bib{EW-BE}{article}{
   author={E, Weinan},
   author={Engquist, Bjorn},
   title={The heterogeneous multiscale methods},
   journal={Commun. Math. Sci.},
   volume={1},
   date={2003},
   number={1},
   pages={87--132},
   issn={1539-6746},
   review={\MR{1979846 (2004b:35019)}},
}

\bib{MR2114818}{article}{
   author={E, Weinan},
   author={Ming, Pingbing},
   author={Zhang, Pingwen},
   title={Analysis of the heterogeneous multiscale method for elliptic
   homogenization problems},
   journal={J. Amer. Math. Soc.},
   volume={18},
   date={2005},
   number={1},
   pages={121--156},
   issn={0894-0347},
   review={\MR{2114818 (2005k:65246)}},
   doi={10.1090/S0894-0347-04-00469-2},
}

\bib{MR2477579}{book}{
   author={Efendiev, Yalchin},
   author={Hou, Thomas Y.},
   title={Multiscale finite element methods},
   series={Surveys and Tutorials in the Applied Mathematical Sciences},
   volume={4},
   note={Theory and applications},
   publisher={Springer, New York},
   date={2009},
   pages={xii+234},
   isbn={978-0-387-09495-3},
   review={\MR{2477579 (2010h:65224)}},
}

\bib{E-H-G}{article}{
   author={Efendiev, Y.},
   author={Hou, T.},
   author={Ginting, V.},
   title={Multiscale finite element methods for nonlinear problems and their
   applications},
   journal={Commun. Math. Sci.},
   volume={2},
   date={2004},
   number={4},
   pages={553--589},
   issn={1539-6746},
   review={\MR{2119929 (2005m:65265)}},
}

\bib{MR1740386}{article}{
   author={Efendiev, Yalchin R.},
   author={Hou, Thomas Y.},
   author={Wu, Xiao-Hui},
   title={Convergence of a nonconforming multiscale finite element method},
   journal={SIAM J. Numer. Anal.},
   volume={37},
   date={2000},
   number={3},
   pages={888--910},
   issn={0036-1429},
   review={\MR{1740386 (2002a:65176)}},
   doi={10.1137/S0036142997330329},
}

\bib{efenpank}{article}{
   author={Efendiev, Yalchin},
   author={Pankov, Alexander},
   title={Numerical homogenization of monotone elliptic operators},
   journal={Multiscale Model. Simul.},
   volume={2},
   date={2003},
   number={1},
   pages={62--79},
   issn={1540-3459},
   review={\MR{2044957 (2005a:65153)}},
   doi={10.1137/S1540345903421611},
}

\bib{evans}{book}{
   author={Evans, Lawrence C.},
   title={Partial differential equations},
   series={Graduate Studies in Mathematics},
   volume={19},
   edition={2},
   publisher={American Mathematical Society},
   place={Providence, RI},
   date={2010},
   pages={xxii+749},
   isbn={978-0-8218-4974-3},
   review={\MR{2597943 (2011c:35002)}},
}

\bib{MR1870426}{article}{
   author={Farhat, Charbel},
   author={Harari, Isaac},
   author={Franca, Leopoldo P.},
   title={The discontinuous enrichment method},
   journal={Comput. Methods Appl. Mech. Engrg.},
   volume={190},
   date={2001},
   number={48},
   pages={6455--6479},
   issn={0045-7825},
   review={\MR{1870426 (2002j:76083)}},
   doi={10.1016/S0045-7825(01)00232-8},
}

\bib{MR2203943}{article}{
   author={Franca, Leopoldo P.},
   author={Madureira, Alexandre L.},
   author={Tobiska, Lutz},
   author={Valentin, Fr{\'e}d{\'e}ric},
   title={Convergence analysis of a multiscale finite element method for
   singularly perturbed problems},
   journal={Multiscale Model. Simul.},
   volume={4},
   date={2005},
   number={3},
   pages={839--866 (electronic)},
   issn={1540-3459},
   review={\MR{2203943 (2006k:65316)}},
   doi={10.1137/040608490},
}

\bib{MR2142535}{article}{
   author={Franca, Leopoldo P.},
   author={Madureira, Alexandre L.},
   author={Valentin, Frederic},
   title={Towards multiscale functions: enriching finite element spaces with
   local but not bubble-like functions},
   journal={Comput. Methods Appl. Mech. Engrg.},
   volume={194},
   date={2005},
   number={27-29},
   pages={3006--3021},
   issn={0045-7825},
   review={\MR{2142535 (2006a:65159)}},
   doi={10.1016/j.cma.2004.07.029},
}

\bib{F-R}{article}{
   author={Franca, L. P.},
   author={Russo, A.},
   title={Deriving upwinding, mass lumping and selective reduced integration
   by residual-free bubbles},
   journal={Appl. Math. Lett.},
   volume={9},
   date={1996},
   number={5},
   pages={83--88},
   issn={0893-9659},
   review={\MR{1415477 (97e:65121)}},
   doi={10.1016/0893-9659(96)00078-X},
}

\bib{MR3066201}{article}{
   author={Harder, Christopher},
   author={Paredes, Diego},
   author={Valentin, Fr{\'e}d{\'e}ric},
   title={A family of multiscale hybrid-mixed finite element methods for the
   Darcy equation with rough coefficients},
   journal={J. Comput. Phys.},
   volume={245},
   date={2013},
   pages={107--130},
   issn={0021-9991},
   review={\MR{3066201}},
   doi={10.1016/j.jcp.2013.03.019},
}

\bib{MR2982460}{article}{
   author={Henning, Patrick},
   title={Convergence of MSFEM approximations for elliptic, non-periodic
   homogenization problems},
   journal={Netw. Heterog. Media},
   volume={7},
   date={2012},
   number={3},
   pages={503--524},
   issn={1556-1801},
   review={\MR{2982460}},
   doi={10.3934/nhm.2012.7.503},
}

\bib{TH-HW}{article}{
   author={Hou, Thomas Y.},
   author={Wu, Xiao-Hui},
   title={A multiscale finite element method for elliptic problems in
   composite materials and porous media},
   journal={J. Comput. Phys.},
   volume={134},
   date={1997},
   number={1},
   pages={169--189},
   issn={0021-9991},
   review={\MR{1455261 (98e:73132)}},
   doi={10.1006/jcph.1997.5682},
}

\bib{MR1642758}{article}{
   author={Hou, Thomas Y.},
   author={Wu, Xiao-Hui},
   author={Cai, Zhiqiang},
   title={Convergence of a multiscale finite element method for elliptic
   problems with rapidly oscillating coefficients},
   journal={Math. Comp.},
   volume={68},
   date={1999},
   number={227},
   pages={913--943},
   issn={0025-5718},
   review={\MR{1642758 (99i:65126)}},
   doi={10.1090/S0025-5718-99-01077-7},
}

\bib{madureira}{article}{
   author={Madureira, Alexandre},
   title={Abstract multiscale-hybrid-mixed methods},
   journal={Calcolo},
   status={to appear}
}

\bib{MR2448695}{article}{
   author={Madureira, Alexandre L.},
   title={A multiscale finite element method for partial differential
   equations posed in domains with rough boundaries},
   journal={Math. Comp.},
   volume={78},
   date={2009},
   number={265},
   pages={25--34},
   issn={0025-5718},
   review={\MR{2448695 (2010a:65243)}},
   doi={10.1090/S0025-5718-08-02159-5},
}

\bib{M-Y}{article}{
   author={Ming, Pingbing},
   author={Yue, Xingye},
   title={Numerical methods for multiscale elliptic problems},
   journal={J. Comput. Phys.},
   volume={214},
   date={2006},
   number={1},
   pages={421--445},
   issn={0021-9991},
   review={\MR{2208685 (2006j:65359)}},
   doi={10.1016/j.jcp.2005.09.024},
}

\bib{PR}{article}{
   author={Pousin, J.},
   author={Rappaz, J.},
   title={Consistency, stability, a priori and a posteriori errors for
   Petrov-Galerkin methods applied to nonlinear problems},
   journal={Numer. Math.},
   volume={69},
   date={1994},
   number={2},
   pages={213--231},
   issn={0029-599X},
   review={\MR{1310318 (95k:65111)}},
   doi={10.1007/s002110050088},
}

\bib{ramalho}{thesis}{
   author={Ramalho, Jairo V. de A.},
   title={Novos métodos de elementos finitos enriquecidos aplicados a modelos de reação-advecção-difusão transientes},
   type={D.Sc. Thesis},
   organization={Laboratório Nacional de Computação Científica},
   date={2005},
   address={Petrópolis, RJ},
   pages={141},
}

\bib{SG}{article}{
   author={Sangalli, Giancarlo},
   title={Capturing small scales in elliptic problems using a residual-free
   bubbles finite element method},
   journal={Multiscale Model. Simul.},
   volume={1},
   date={2003},
   number={3},
   pages={485--503 (electronic)},
   issn={1540-3459},
   review={\MR{2030161 (2004m:65202)}},
   doi={10.1137/S1540345902411402},
}

\bib{adrianov}{article}{
   author={Tokarzewski, S.},
   author={Andrianov, I.},
   title={Effective coefficients for real non-linear and fictitious linear
   temperature-dependent periodic composites},
   journal={Internat. J. Non-Linear Mech.},
   volume={36},
   date={2001},
   number={1},
   pages={187--195},
   issn={0020-7462},
   review={\MR{1783661 (2001i:80011)}},
   doi={10.1016/S0020-7462(00)00012-3},
}

\bib{SV1}{article}{
   author={Versieux, H. M.},
   author={Sarkis, M.},
   title={Numerical boundary corrector for elliptic equations with rapidly
   oscillating periodic coefficients},
   journal={Comm. Numer. Methods Engrg.},
   volume={22},
   date={2006},
   number={6},
   pages={577--589},
   issn={1069-8299},
   review={\MR{2235030 (2007d:65117)}},
   doi={10.1002/cnm.834},
}

\bib{SV2}{article}{
   author={Versieux, Henrique},
   author={Sarkis, Marcus},
   title={A three-scale finite element method for elliptic equations with
   rapidly oscillating periodic coefficients},
   conference={
      title={Domain decomposition methods in science and engineering XVI},
   },
   book={
      series={Lect. Notes Comput. Sci. Eng.},
      volume={55},
      publisher={Springer},
      place={Berlin},
   },
   date={2007},
   pages={763--770},
   review={\MR{2334173}},
   doi={10.1007/978-3-540-34469-8\_95},
}

\bib{xu}{article}{
   author={Xu, Jinchao},
   title={Two-grid discretization techniques for linear and nonlinear PDEs},
   journal={SIAM J. Numer. Anal.},
   volume={33},
   date={1996},
   number={5},
   pages={1759--1777},
   issn={0036-1429},
   review={\MR{1411848 (97i:65169)}},
   doi={10.1137/S0036142992232949},
}

\bib{MR2901822}{article}{
   author={Yu, Tao},
   author={Yue, Xingye},
   title={Residual-free bubble methods for numerical homogenization of
   elliptic problems},
   journal={Commun. Math. Sci.},
   volume={9},
   date={2011},
   number={4},
   pages={1163--1176},
   issn={1539-6746},
   review={\MR{2901822}},
   doi={10.4310/CMS.2011.v9.n4.a12},
}



\end{biblist}
\end{bibdiv}

\end{document}